\newtheorem{dfn}{Definition}[section]
\newtheorem{prop}[dfn]{Proposition}
\newtheorem{theo}[dfn]{Theorem}
\newtheorem{ex}[dfn]{Example}
\newtheorem{lem}[dfn]{Lemma}
\newtheorem*{theorem}{Theorem}
\newcommand{\RR}{\mathbb{R}}
\newcommand{\CC}{\mathbb{C}}
\newcommand{\NN}{\mathbb{N}}
\newcommand{\cD}{\mathcal{D}}
\newcommand{\cE}{\mathcal{E}}
\newcommand{\cM}{\mathcal{M}}
\newcommand{\Hom}{\text{Hom}}
\newcommand{\End}{\text{End}}
\newcommand{\fm}{\mathfrak{m}}
\newcommand{\fg}{\mathfrak{g}}
\newcommand{\com}{\mathbin{{\scriptstyle \circ }}}
\newcommand{\ev}{\mathord{\mathrm{ev}}}
\newcommand{\supp}{\mathord{\mathrm{supp}}}
\newcommand{\C}{\mathord{\mathcal{C}^{\infty}}}
\newcommand{\Cc}{\mathord{\mathcal{C}^{\infty}_{c}}}
\newcommand{\GGG}{\mathscr{G}^{\scriptscriptstyle\#}}
\title[]
      {Reflexivity of the space of transversal distributions}
\author{J. Kali\v{s}nik}
\address{Department of Mathematics, University of Ljubljana,
         Jadranska 19, 1000 Ljubljana, Slovenia;
         Institute of Mathematics, Physics and Mechanics,
         University of Ljubljana, Jadranska 19,
         1000 Ljubljana, Slovenia}
\email{jure.kalisnik@fmf.uni-lj.si}
\thanks{This research was supported by research grants J1-1690, N1-0137, N1-0237 and research program Analysis and Geometry P1-0291 from 
the Slovenian Research Agency ARRS}
\subjclass[2010]{46A04,46A08,46A13,46A25,46F05,46G05,46H25}
\keywords{Distributions with compact support, Fr\'{e}chet spaces, transversal distributions, homomorphisms of modules, reflexive modules}
\begin{document}

\begin{abstract}
We show that the Fr\'{e}chet space $\C(P)$ of smooth functions on the total space of a surjective submersion $\pi:P\to M$ is a
reflexive $\Cc(M)$-module.
\end{abstract}

\maketitle

\section{Introduction}

Spaces of distributions on a smooth manifold play an important role in the theory of linear partial differential equations.
The space $\cD'(P)$ of distributions on a manifold $P$ is defined as the continuous dual of the $LF$-space $\Cc(P)$
of smooth, compactly supported functions on $P$. In practical applications the space $\cD'(P)$ is usually too large, so we
often restrict ourselves to some subspace of it, such as for example Sobolev space $W^{k,p}(P)$ or some $L^{p}(P)$ space. In our
concrete case, we are
primarily interested in convolution of distributions, so it makes sense to study
the space of compactly supported distributions on $P$. It is denoted by $\cE'(P)$ and can be equivalently
described as the continuous dual of the Fr\'{e}chet space $\C(P)$ of smooth functions on $P$.

The notion of a distribution on a single manifold naturally extends
to that of a $\pi$-transversal distribution on the surjective submersion $\pi:P\to M$, which is by definition a continuous 
$\C(M)$-linear map from $\Cc(P)$ to $\C(M)$. A basic example of such a submersion is the trivial bundle
$t:M\times M\to M$ over a closed manifold $M$, where $t$ is the projection on the first factor. 
In this case we can identify the space $\cE'_{t}(M\times M)$ of $t$-transversal distributions with the space of
Schwartz kernels of continuous linear operators from $\C(M)$ to $\C(M)$ in the following way. If 
$D:\C(M)\to\C(M)$ is a continuous linear operator, we denote by $K_{D}$ its Schwartz kernel, an element of $\cD(M\times M)$.
The kernel $K_{D}$ is semiregular with respect
to $t$ (see \cite{Tre67}) and connected to $D$ by the formula
\[
D(f)(x)=\int_{M}K_{D}(x,y)f(y)dy,
\]
where the integral is meant in the distributional sense and $f\in\C(M)$. The kernel $K_{D}$ induces a continuous $\C(M)$-linear map
$T_{K_{D}}:\C(M\times M)\to\C(M)$, given by $T_{K_{D}}(F)=t_{*}(FK_{D})$ for $F\in\C(M\times M)$.
From the Schwartz kernels theorem it then follows that we have an isomorphism
\[
\cE'_{t}(M\times M)=\Hom_{\C(M)}(\C(M\times M),\C(M))\cong\End(\C(M)).
\]
This point of view gives us an algebra structure on the space $\cE'_{t}(M\times M)$, which corresponds to convolution of Schwartz kernels. However, there
is also the third point of view, which is connected to the isomorphism
\[
\cE'_{t}(M\times M)\cong\C(M,\cE'(M)).
\]
Here we view a $t$-transversal distribution $T\in\cE'_{t}(M\times M)$ as a smooth family $(T_{x})_{x\in M}$ 
of compactly supported distributions along the fibres of $t$.

The algebra structure on the space $\cE'_{t}(M\times M)$ is related to the fact that $M\times M$ is the pair Lie groupoid 
with the target map $t$. In \cite{LeMaVa17} the authors have constructed a convolution product on the space $\cE'_{t}(G)$ of
$t$-transversal compactly supported distributions on any Lie groupoid $G$. This construction extends the well known case of $G=M\times M$ and
provides a natural representation of $\cE'_{t}(G)$ into $\End(\C(M))$. Convolution algebra $\cE'_{t}(G)$ 
plays an important role in \cite{ErYu19}, where the authors used Lie groupoids to characterize classical pseudodifferential operators on manifolds.
In \cite{KaMr22} the authors constructed for any Lie groupoid $G$ associated bialgebroid $\Cc(\GGG,\fg)$ with a natural representation into
$\cE'_{t}(G)$, while in \cite{AnMoYu21} transversal distributions were used to
define the convolution algebra of Schwartz kernels on a singular foliation. Distributions transverse to a submersion were used also 
in \cite{AnSk11} to define a pseudodifferential calculus on a singular foliation.

We now turn to the case of an arbitrary surjective submersion $\pi:P\to M$. In this generality we do not have any natural product
on the space of compactly supported $\pi$-transversal distributions, defined by
\[
\cE'_{\pi}(P)=\Hom_{\Cc(M)}(\C(P),\Cc(M)).
\]
However, we can still consider $\pi$-transversal distributions as smooth families of distributions
along the fibres. In this paper we focus on the functional analytic aspects of the space $\cE'_{\pi}(P)$.
It is a standard result that the space $\C(P)$ is a reflexive Fr\'{e}chet space for any manifold $P$, with an explicit isomorphism 
$\hat{}:\C(P)\to\cE'(P)'$ given by $\hat{F}(v)=v(F)$ for $F\in\C(P)$ and $v\in\cE'(P)$.
Our main theorem extends this result to the setting of transversal distributions. 

\begin{theorem}
Let $\pi:P\to M$ be a surjective submersion. The map
\[
\hat{}:\C(P)\to\Hom_{\Cc(M)}(\cE'_{\pi}(P),\Cc(M))
\]
is an isomorphism of locally convex vector spaces. In particular, this implies that $\C(P)$ and $\cE'_{\pi}(P)$
are reflexive $\Cc(M)$-modules.
\end{theorem}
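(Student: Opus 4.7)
The plan is to invert $\hat{}$ explicitly via a fibrewise construction: for each $\Phi$, produce a smooth function $F$ on $P$ by probing the fibres of $\pi$ with local sections and evaluations of $\Phi$, and then verify the identity $\Phi=\hat{F}$ by a density argument.

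The map $\hat{}$ is $\Cc(M)$-linear and continuous by construction, where $\Cc(M)$ acts on $\cE'_{\pi}(P)$ by post-multiplication in the target. For injectivity, every $p\in P$ with $x=\pi(p)$ admits a local section $\sigma:U\to P$ of $\pi$ with $\sigma(x)=p$ together with a bump $\chi\in\Cc(U)$ satisfying $\chi(x)=1$; the formula $T_{\sigma,\chi}(G)(y)=\chi(y)G(\sigma(y))$, extended by zero, defines an element of $\cE'_{\pi}(P)$ for which $\hat{F}(T_{\sigma,\chi})(x)=F(p)$, so $\hat{F}=0$ forces $F\equiv 0$.

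For surjectivity, fix $\Phi\in\Hom_{\Cc(M)}(\cE'_{\pi}(P),\Cc(M))$ and work in a submersion chart where $\pi$ is identified with the projection $U\times Z\to U$. Using the identification $\cE'_{\pi}(U\times Z)\cong\C(U,\cE'(Z))$ mentioned in the introduction, I define $\Phi_{y}:\cE'(Z)\to\CC$ for each $y\in U$ by
\[
\Phi_{y}(v)=\frac{\Phi(\chi\cdot v)(y)}{\chi(y)}
\]
for any $\chi\in\Cc(U)$ with $\chi(y)\neq 0$; $\Cc(U)$-linearity of $\Phi$ makes this independent of $\chi$. Then $\Phi_{y}$ is a continuous linear functional on $\cE'(Z)$, so by the classical reflexivity $\cE'(Z)'\cong\C(Z)$ there is a unique $F_{y}\in\C(Z)$ with $\Phi_{y}(v)=v(F_{y})$. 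Setting $F(y,z):=F_{y}(z)$ gives a local candidate, and a partition of unity subordinate to a trivializing cover of $\pi$, together with the canonicity of the construction on overlaps, assembles these into a function $F$ on all of $P$.

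The main obstacle is the smoothness of $F$, or equivalently the smoothness of $y\mapsto F_{y}$ as a map $U\to\C(Z)$; via the reflexivity isomorphism this reduces to smoothness of the functional-valued family $y\mapsto\Phi_{y}$, which should follow from the continuity of $\Phi$ combined with the smooth family structure on $\cE'_{\pi}$, and is the genuine technical point of the proof. Once $F\in\C(P)$ is in hand, the identity $\Phi=\hat{F}$ follows by density: by construction the two maps agree on every local distribution of the form $\chi\cdot v$, hence on the $\Cc(M)$-submodule they generate; since derivatives of Dirac masses are weakly dense in $\cE'(Z)$ and a partition of unity on $M$ globalizes, this submodule is dense in $\cE'_{\pi}(P)$, and continuity forces equality. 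Bicontinuity of $\hat{}$ is built into the construction, and the reflexivity of $\cE'_{\pi}(P)$ as a $\Cc(M)$-module follows formally by dualizing the main isomorphism once more.
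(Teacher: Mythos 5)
Your overall architecture matches the paper's: define a localized functional at each $x\in M$ (your $\Phi_{y}$ is the paper's induced map $a_{x}:\cE'_{\pi}(P)(x)\to\CC$), identify it with an element of $\C(P_{x})$ via classical reflexivity, assemble these into a candidate function $F=\check{a}$, and argue it is smooth and inverts $\hat{}$. Your injectivity argument via local sections and bumps is also essentially what the paper does (and what its Example (2) already set up). However, there are three gaps that prevent this from being a complete proof.

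First, you identify the smoothness of $y\mapsto F_{y}$ as ``the genuine technical point'' but then only assert that it ``should follow.'' It does not follow from general nonsense; the paper proves it by showing the $\C(N)$-valued map is \emph{scalarly} smooth (because $x\mapsto\check{a}_{x}(v)=a(\overline{v})(x)$ is the smooth function $a(\overline{v})$, where $\overline{v}$ is the constant transversal distribution with value $v$) and then invoking completeness of $\C(N)$ together with \cite{KrMi97} to upgrade scalar smoothness to smoothness. This step is not optional and must be carried out.

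Second, and more seriously, you assert ``bicontinuity of $\hat{}$ is built into the construction.'' Neither direction is automatic. Continuity of $\hat{}$ requires that bounded subsets of $\cE'_{\pi}(P)$ be equicontinuous on $\C(P)$, which is a Banach--Steinhaus/barrelledness argument, not a formal consequence of the definition. Openness of $\hat{}$ (equivalently, continuity of the inverse $\Phi\mapsto F$) is the hardest part of the paper's proof: one must exhibit, for each basic neighbourhood $V_{L\times K,n,\epsilon}$ of zero in $\C(P)$, a bounded set $B$ of transversal distributions (the paper uses derivative-of-Dirac families $\llbracket E_{y},\eta D^{\beta}_{y}\rrbracket$ supported on local sections) and a neighbourhood $V$ of zero in $\Cc(M)$ such that $K(B,V)\subset\widehat{V_{L\times K,n,\epsilon}}$. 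Nothing in your construction addresses this estimate, and without it you only get a linear bijection, not an isomorphism of locally convex spaces.

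Third, your final ``density argument'' to establish $\Phi=\hat{F}$ is unnecessary and also unjustified as stated (you would need to know that the submodule generated by the $\chi\cdot v$ is dense, which is itself a nontrivial claim). The paper avoids this entirely: once $\check{a}\in\C(P)$, the identity $(\check{a})^{\hat{}}=a$ is immediate from $a(T)(x)=a_{x}(T_{x})=T_{x}(\check{a}_{x})=T(\check{a})(x)$ for every $T$ and $x$. You already have the ingredients for this direct verification, so you should use them rather than appeal to density.
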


We will study applications of this result in a separate paper. More precisely, so far we have not used the fact that $\C(P)$ is an algebra
over $\Cc(M)$. The whole $\Cc(M)$-dual $\cE'_{\pi}(P)$ of $\C(P)$ is too large to be a coalgebra in the algebraic sense, so we need to
restrict ourselves to a suitable subspace of $\cE'_{\pi}(P)$. We hope to find a functorial construction of a coalgebra, assigned to any
smooth surjective submersion $\pi:P\to M$, which generalizes the construction in \cite{Mrc07} and contains enough information to reconstruct
the submersion $\pi:P\to M$.

\section{Preliminaries}

For the convenience of the reader and to fix the notations, we will first quickly review some
basic definitions concerning locally convex vector spaces, smooth manifolds, spaces of functions and spaces of distributions. 
See \cite{Tre67} and \cite{Hor67} for more details. All our locally convex vector spaces will be complex and Hausdorff
and all manifolds will be assumed to be smooth, Hausdorff and second countable.

A subset $B$ of a locally convex space $E$ is bounded if and only if the set $p(B)$ is a bounded subset of $\RR$ for any continuous
seminorm $p$ on $E$. We will denote by $E'=\Hom(E,\CC)$ the space of all continuous linear functionals on $E$. 
If $F$ is another locally convex space, we similarly define by $\Hom(E,F)$ the space of all continuous linear maps from $E$ to $F$. 
We will equip all these spaces of maps with the strong topology of uniform
convergence on bounded subsets. The basis of neighbourhoods of zero in $E'$ consists of sets of the form
\[
K(B,\epsilon)=\{v\in E'\,|\,|v(F)|<\epsilon\text{ for all }F\in B\},
\]
where $B$ is a bounded subset of $E$ and $\epsilon>0$. Similarly, the basis of neighbourhoods of zero 
in $\Hom(E,F)$ consists of sets of the form
\[
K(B,V)=\{T\in\Hom(E,F)\,|\,T(B)\subset V\},
\]
where $B$ is a bounded subset of $E$ and $V$ is a neighbourhood of zero in $F$. 
If $E$ and $F$ are modules over a complex algebra $A$, we will denote the corresponding space of continuous module 
homomorphisms by $\Hom_{A}(E,F)$. Since it is a subspace of $\Hom(E,F)$, it carries the induced topology.

We now recall the definition of the Fr\'{e}chet topology on $\C(M)$ for any smooth
manifold $M$. Choose local coordinates $\phi=(x_{1},\ldots,x_{l}):U\to\RR^{l}$ on $M$. For any $m\in\NN_{0}$ and any compact subset $K\subset U$
we define the seminorm $p_{K,m}$ on $\C(M)$ by
\[
p_{K,m}(F)=\sup_{\substack{x\in K,|\alpha|\leq m}}|D^{\alpha}(F)(x)|
\]
for $F\in\C(M)$. Here we denoted $D^{\alpha}(F)=\frac{\partial^{|\alpha|}F}{\partial x_{1}^{\alpha_{1}}\cdots\partial x_{l}^{\alpha_{l}}}$, where 
$\alpha=(\alpha_{1},\ldots,\alpha_{l})\in\NN_{0}^{l}$ is a multi-index and $|\alpha|=\alpha_{1}+\alpha_{2}+\ldots+\alpha_{l}$. To keep
notation simple, we avoided denoting the dependence of seminorm on the coordinate chart. 
The sets of the form
\[
V_{K,m,\epsilon}=\{F\in\C(M)\,|\,p_{K,m}(F)<\epsilon\}
\]
for $K$ compact subset in some chart of $M$, $m\in\NN_{0}$ and $\epsilon>0$ then form a subbasis of neighbourhoods of zero 
for the Fr\'{e}chet topology on $\C(M)$. With this
topology $\C(M)$ becomes a complete, metrizable locally convex vector space. This topology 
coincides with the topology of uniform convergence of all derivatives on compact subsets of $M$.

The subspace $\Cc(M)$ of $\C(M)$, consisting of compactly supported
functions, is not complete in the Fr\'{e}chet topology, which makes us consider a finer LF-topology on $\Cc(M)$. 
We first denote for any compact subset $K$ of $M$ by $\Cc(K)$ the space of functions with
support contained in $K$. The space $\Cc(K)$ is a closed subset of $\C(M)$ and hence a Fr\'{e}chet space itself.
The LF-topology on $\Cc(M)$ is then defined as the inductive limit topology with respect to the family of all subspaces
of the form $\Cc(K)$ for $K\subset M$ compact. In particular, an absolutely convex subset $V$ of $\Cc(M)$ is
a neighbourhood of zero in $\Cc(M)$ if and only if $V\cap\Cc(K)$ is a neighbourhood of zero in $\Cc(K)$ for every compact subset
$K$ of $M$. The space 
$\Cc(M)$ with LF-topology is a complete locally convex space, which is not metrizable, if $M$ is not compact.

The space of distributions on $M$ is defined as $\cD'(M)=\Cc(M)'$, while the space of compactly supported
distributions on $M$ is defined by $\cE'(M)=\C(M)'$. We have a natural inclusion of the space $\cE'(M)$
into $\cD'(M)$, whose image consists of distributions with compact support. The algebra $\C(M)$ acts
on $\cD'(M)$ by $(F\cdot v)(G)=v(FG)$ for $F\in\C(M)$, $G\in\Cc(M)$ and $v\in\cD'(M)$.
It is known that the space $\C(M)$ is reflexive, which
means that $\C(M)\cong\cE'(M)'$.

If $M$ is a smooth manifold and $E$ is a locally convex vector space,
a vector valued function $\alpha:M\to E$ is smooth if in local coordinates all partial derivatives exist
and are continuous. A map $\alpha:M\to E$ is scalarly smooth if the map $\phi\circ\alpha:M\to\CC$ is smooth for every $\phi\in E'$. If $\alpha$
is smooth it is also scalarly smooth, but the implication in the reverse direction does not always hold. However, 
if the space $E$ is complete, then every scalarly smooth function into $E$ is smooth \cite{KrMi97}.
We will denote by $\C(M,E)$ the space of smooth functions on $M$ with values in $E$ and by
$\Cc(M,E)$ its subspace, consisting of compactly supported functions. To make a distinction between scalar functions and vector valued functions, 
we will denote by $f(x)\in\CC$ the value of a function $f\in\C(M)$ at $x$ and by $u_{x}\in E$ the value of a function $u\in\C(M,E)$ at $x$.

\section{Correspondence between transversal distributions and families of distributions}

Let $P$ and $M$ be smooth manifolds and let $\pi:P\to M$ be a surjective submersion. We can view $P$ as a smooth family
of closed submanifolds $P_{x}=\pi^{-1}(x)$, parametrized by $x\in M$. Furthermore, smooth families of distributions on these fibres
can be equivalently described by $\pi$-transversal distributions.
In this section we will recall the definition of $\pi$-transversal
distributions on $P$ and its connection with smooth families of distributions along the fibres of $\pi$. This 
correspondence has been already used in \cite{AnSk11}, \cite{LeMaVa17}, \cite{ErYu19} and \cite{AnMoYu21}. 
However, since it plays a very important role in the proof of our main theorem and to make the presentation complete, we will
review the main definitions and constructions.

Note that the algebra $\C(M)$ acts on $\C(P)$ by $f\cdot F=(f\com\pi)F$ for $f\in\C(M)$ and $F\in\C(P)$. So we can
define the following vector spaces.

\begin{dfn}
Let $P$ and $M$ be smooth manifolds and suppose $\pi:P\to M$ is a surjective submersion. 
\begin{enumerate}
\item \textbf{The space of $\pi$-transversal distributions} on $P$ is the vector space
\[
\cD'_{\pi}(P)=\Hom_{\C(M)}(\Cc(P),\C(M)).
\]

\item \textbf{The space of $\pi$-transversal distributions with compact support} is defined similarly by 
\[
\cE'_{\pi}(P)=\Hom_{\Cc(M)}(\C(P),\Cc(M)).
\]

\item \textbf{A smooth family of distributions with compact supports along fibres of $\pi$} is an element $u=(u_{x})_{x\in M}\in\prod_{x\in M}\cE'(P_{x})$,
such that for every $F\in\C(P)$ the function $u(F)$, defined by $u(F)(x)=u_{x}(F|_{P_{x}})$, is smooth on $M$. The vector
space of all such smooth families will be denoted by
\[
\C(\prod_{x\in M}\cE'(P_{x})).
\]
\end{enumerate}
\end{dfn}

Let us take a look at some basic examples.

\begin{ex} \rm
(1) If $P$ is any smooth manifold and $M$ is a point, then the constant map $\pi:P\to M$ is a surjective submersion and in this
case the space of $\pi$-transversal distributions $\cE'_{\pi}(P)$ coincides with the space $\cE'(P)$ 
of ordinary compactly supported distributions on $P$.

(2) Let $\pi:P\to M$ be a surjective submersion and let $E$ be an image of a local section of $\pi$ 
(a submanifold of $P$ that maps by $\pi$ diffeomorphically to an open subset $U=\pi(E)$ of $M$). 
Denote by $\sigma_{E}:U\to E$ the smooth inverse of the map $\pi|_{E}$. For any $f\in\Cc(U)$ we define a $\pi$-transversal distribution 
$\llbracket E,f\rrbracket$ by
\[
\llbracket E,f\rrbracket(F)(x)=
\left\{ \begin{array} {ll}
      f(x)F(\sigma_{E}(x))   &;\hspace{1mm}x\in U,\\
      0                      &;\hspace{1mm}x\notin U,
       \end{array}\right.
\]
for $F\in\C(P)$. We think of $\llbracket E,f\rrbracket$ as a smooth family of Dirac distributions, supported on the section $E$ and weighted by
the function $f$. In particular, we have
\[
\llbracket E,f\rrbracket_{x}=f(x)\delta_{\sigma_{E}(x)}.
\] 
As an important special case of this construction let us consider the case when $M$ is closed, 
$P=M\times M$ and $t:M\times M\to M$ is the projection on the first factor. In this case
we can, by the Schwartz kernels theorem, identify $\cE'_{t}(M\times M)$ with the space $\Hom(\C(M),\C(M))$.
If $E\subset M\times M$ is a graph of a diffeomorphism $\Phi:M\to M$ and $f\equiv 1$ is the unit of $\C(M)$, then $\llbracket E,f\rrbracket$ 
corresponds to the Schwartz kernel of the pullback operator $\Phi^{*}:\C(M)\to\C(M)$.

(3) Suppose again that $M$ is a closed manifold, $P=M\times M$ and $t$ is projection on the first factor. For any linear partial differential
operator $D$ on $M$ we can define a $t$-transversal distribution $\llbracket M,D\rrbracket$ by
\[
\llbracket M,D\rrbracket(F)(x)=D(F|_{\{x\}\times M})(x).
\]
Here we have considered $F|_{\{x\}\times M}$ as a smooth function on $M$, so $D(F|_{\{x\}\times M})$ is well defined. This distribution is supported
on the diagonal $M\subset M\times M$ and computes the vertical $D$-derivative of $F$. It corresponds to the Schwartz kernel of the operator $D$.

More generally, let $\pi:P\to M$ be a surjective submersion and let $E$ be an image of a local section of $\pi$, as in the previous example.
Choose a linear partial differential operator $D$ on $P$ which acts along the fibres of $\pi$. For any $f\in\Cc(M)$ we then define 
$\llbracket E,fD\rrbracket\in\cE'_{\pi}(P)$ by
\[
\llbracket E,fD\rrbracket(F)(x)=
\left\{ \begin{array} {ll}
      f(x)D(F)(\sigma_{E}(x))   &;\hspace{1mm}x\in U,\\
      0                      &;\hspace{1mm}x\notin U.
       \end{array}\right.
\]

(4) Let $M=\RR^{n}$, $P=\RR^{n}\times\RR^{n}$ and let $\pi:\RR^{n}\times\RR^{n}\to\RR^{n}$ be the projection on the first factor. For  
$\phi\in\Cc(\RR^{n}\times\RR^{n})$ we define $\pi$-transversal distribution $T_{\phi}$ by
\[
T_{\phi}(F)(x)=\int_{\RR^{n}}\phi(x,y)F(x,y)dy.
\]
This $\pi$-transversal distribution corresponds to the family of smooth densities on $\RR^{n}$, parametrized by $\RR^{n}$. Explicitly,
\[
(T_{\phi})_{x}=\phi(x,-)dV,
\]
where $dV$ is Lebesgue measure on $\RR^{n}$. Distribution $T_{\phi}$ corresponds to the Schwartz kernel of the integral operator, associated to
the function $\phi$.
\end{ex}

The vector spaces $\cD'_{\pi}(P)$ and $\C(\prod_{x\in M}\cE'(P_{x}))$ are modules over $\C(P)$ and $\C(M)$, with
the actions given by:
\begin{align*}
(F\cdot T)(G)&=T(FG), \\
(f\cdot T)(G)&=T(f\cdot G), \\
(F\cdot u)_{x}&=F|_{P_{x}}\cdot u_{x}, \\
(f\cdot u)_{x}&=f(x)u_{x},
\end{align*}
for $T\in\cE'_{\pi}(P)$, $u\in\C(\prod_{x\in M}\cE'(P_{x}))$, $f\in\C(M)$, $G\in\Cc(P)$ and $F\in\C(P)$.

We will now recall the definition of the support of a transversal distribution. This will enable us to identify 
$\cE'_{\pi}(P)$ with the subspace of compactly supported distributions in $\cD'_{\pi}(P)$.

\begin{dfn}
\textbf{Support} of a $\pi$-transversal distribution $T\in\cD'_{\pi}(P)$ is the subset $\supp(T)$ of $P$, consisting of all points 
$p\in P$, which satisfy the condition that for every open neighbourhood $U$ of $p$ there exists $F\in\Cc(U)$ with $T(F)\neq 0$.
\end{dfn}
Note that we can equivalently define that $p\in\supp(T)^{c}$ if and only if there exists an open
neighbourhood $U$ of $p$ such that $T(F)=0$ for every $F\in\Cc(U)$. From this characterization it easily follows
that $\supp(T)$ is a closed subset of $P$.

The inclusions $\Cc(M)\hookrightarrow\C(M)$ and $\Cc(P)\hookrightarrow\C(P)$ are continuous, so we can define 
for any $T\in\cE'_{\pi}(P)$ the composition
\[
\Cc(P)\hookrightarrow\C(P)\overset{T}{\longrightarrow}\Cc(M)\hookrightarrow\C(M),
\]
which is continuous and $\C(M)$-linear. In particular, this composition defines an element of $\cD'_{\pi}(P)$, which
we will again denote by $T$. As a result we obtain an injective linear map $\cE'_{\pi}(P)\hookrightarrow\cD'_{\pi}(P)$,
which enables us to view $\cE'_{\pi}(P)$ as a vector subspace of $\cD'_{\pi}(P)$.
Similarly as in the case of distributions on a single manifold, one can prove the following basic result.

\begin{prop}\label{characterization of distributions with compact support}
A $\pi$-transversal distribution $T\in\cD'_{\pi}(P)$ belongs to $\cE'_{\pi}(P)$ if and only if $\supp(T)$ is compact.
\end{prop}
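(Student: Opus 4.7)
The plan is to prove the two directions of the equivalence separately. The direction $\supp(T)$ compact $\Rightarrow T\in\cE'_\pi(P)$ is established by an explicit cutoff construction, while the converse uses the continuity of $T$ together with the strict LF structure of $\Cc(M)$.

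For $(\Leftarrow)$, assume $\supp(T)$ is compact and pick a cutoff $\chi\in\Cc(P)$ with $\chi\equiv 1$ on an open neighborhood of $\supp(T)$. Define $\tilde T\colon\C(P)\to\C(M)$ by $\tilde T(F)=T(\chi F)$, which makes sense because $\chi F\in\Cc(P)$. I would then verify four points. First, $\tilde T$ is independent of the choice of $\chi$: two admissible cutoffs differ by a function vanishing in a neighborhood of $\supp(T)$, on which $T$ is zero. Second, $\tilde T|_{\Cc(P)}$ recovers $T$, since $F-\chi F$ vanishes near $\supp(T)$ for $F\in\Cc(P)$. Third, $\tilde T$ takes values in $\Cc(M)$: the set $\pi(\supp\chi)$ is compact, so choosing $g\in\Cc(M)$ with $g\equiv 1$ there, $\C(M)$-linearity of $T$ gives $\tilde T(F)=T((g\com\pi)\chi F)=g\cdot T(\chi F)$, whose support lies in $\supp g$. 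Fourth, $\tilde T$ is continuous and $\Cc(M)$-linear: continuity reduces to continuity of the multiplication $F\mapsto\chi F$ from $\C(P)$ to $\Cc(\supp\chi)$ composed with $T$, while $\Cc(M)$-linearity is inherited from the $\C(M)$-linearity of $T$ since $\Cc(M)\subset\C(M)$.

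For $(\Rightarrow)$, assume $T\in\cE'_\pi(P)$, so $T\colon\C(P)\to\Cc(M)$ is continuous from a Fr\'echet space into a strict inductive limit of the Fr\'echet spaces $\Cc(K_n)$ for an exhaustion $M=\bigcup K_n$. By the Dieudonn\'e--Schwartz (or Grothendieck) factorization theorem, $T$ factors through $\Cc(K)\hookrightarrow\Cc(M)$ for some compact $K\subset M$. Viewing $T$ as a map between the Fr\'echet spaces $\C(P)$ and $\Cc(K)$, continuity at the seminorm $\sup_K|\cdot|$ yields a compact $L\subset P$, an integer $k$, and a constant $C$ with
\[
\sup_{x\in K}|T(F)(x)|\leq C\,p_{L,k}(F)\qquad\text{for all }F\in\C(P).
\]
If $F\in\Cc(P)$ satisfies $\supp F\cap L=\emptyset$, then $p_{L,k}(F)=0$, hence $T(F)$ vanishes on $K$; combined with $\supp T(F)\subset K$, this forces $T(F)=0$. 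Consequently any $p\notin L$ has a neighborhood disjoint from $L$ on which $T$ annihilates all test functions, so $\supp(T)\subset L$ is compact.

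The main obstacle is invoking Dieudonn\'e--Schwartz factorization in the precise form needed; once available, the seminorm estimate at order zero is enough because the extra constraint $\supp T(F)\subset K$ upgrades ``vanishing on $K$'' to ``vanishing globally''. The verifications in the $(\Leftarrow)$ direction are routine bookkeeping, but one must be careful that the extension $\tilde T$ is genuinely independent of $\chi$ and that the $\Cc(M)$-linearity matches the action defined earlier in the section.
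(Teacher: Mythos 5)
The paper does not actually supply a proof of this proposition; it simply remarks that it ``can be proved similarly as in the case of distributions on a single manifold.'' Your argument is a correct and complete realization of that analogy.

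Both directions are sound. In $(\Leftarrow)$ the key computations are exactly right: choosing $g\in\Cc(M)$ with $g\equiv 1$ on $\pi(\supp\chi)$, the $\C(M)$-linearity of $T$ gives $T(\chi F)=g\cdot T(\chi F)$, which forces $\supp T(\chi F)\subset\{g=1\}\subset\supp g$ and hence $\tilde T(F)\in\Cc(\supp g)\subset\Cc(M)$; continuity then follows from the continuity of $F\mapsto\chi F$ into the Fr\'echet space $\Cc(\supp\chi)$ and the fact that the $(LF)$-topology on $\Cc(M)$ induces the Fr\'echet topology on $\Cc(\supp g)$. The verifications that $\tilde T|_{\Cc(P)}=T$ and that $\tilde T$ does not depend on $\chi$ both hinge on the standard partition-of-unity fact that $T$ annihilates every compactly supported function whose support misses $\supp(T)$; it is worth keeping that intermediate lemma in mind as the point where the definition of support actually enters. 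In $(\Rightarrow)$, invoking the Grothendieck--Dieudonn\'e--Schwartz factorization of the Fr\'echet-to-strict-$(LF)$ map $T$ through some step $\Cc(K)$ is the right move, and your reduction is clean: once you know $\supp T(F)\subset K$ automatically, a zeroth-order seminorm estimate on $K$ already yields global vanishing of $T(F)$ for $F$ supported off the compact $L$. The only cosmetic point is that on a general manifold the continuity estimate is a finite maximum of chart-local seminorms $p_{L_i,k_i}$ rather than a single $p_{L,k}$, but taking $L=\bigcup_i L_i$ makes the annihilation argument go through verbatim. No gaps.
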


In the case of smooth families of distributions we have several different notions of support. If we consider a family of distributions
as a some kind of a vector valued map, it makes sense to define its support as a subset of the base manifold. On the other hand, in the view of 
the correspondence with transversal distributions, we will also consider its support as a subspace of the total space of the submersion.

\begin{dfn}
Let $\pi:P\to M$ be a surjective submersion and let $u$ be a smooth family of distributions with compact supports
along the fibres of $\pi$. 
\begin{enumerate}
\item The \textbf{support} of $u$ is the subset $\supp(u)$ of $M$, defined by 
\[
\supp(u)=\overline{\{x\in M\,|\,u_{x}\neq 0\}}.
\]
\item The \textbf{total support} of $u$ is the subset $\supp_{P}(u)$ of $P$, consisting of all points 
$p\in P$, which satisfy the condition that for every open neighbourhood $U$ of $p$ there exists $F\in\Cc(U)$ such that $u(F)\neq 0$.
\end{enumerate}
\end{dfn}

The relationship between these two types of support is described in the following proposition.
\begin{prop}
Let $\pi:P\to M$ be a surjective submersion.
\begin{enumerate}
\item For every smooth family $u\in\C(\prod_{x\in M}\cE'(P_{x}))$ we have
\[
\supp_{P}(u)=\overline{\bigcup_{x\in M}\supp(u_{x})}.
\]
\item Let $u\in\C(\prod_{x\in M}\cE'(P_{x}))$ be a smooth family of distributions. Then $\supp(u)$ is compact if and only if $\supp_{P}(u)$ is compact.
Moreover, if $\supp(u)$ and $\supp_{P}(u)$ are compact, we have $\pi(\supp_{P}(u))=\supp(u)$.
\end{enumerate}
\end{prop}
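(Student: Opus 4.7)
The plan is to establish (1) as a bookkeeping identity of sets, and then deduce (2) from (1) together with the transversal-distribution viewpoint and the earlier characterization of compactly supported transversal distributions.

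For (1), I would prove the two inclusions separately. To see $\overline{\bigcup_{x}\supp(u_{x})}\subseteq\supp_{P}(u)$, fix $p$ in the closure and any open neighborhood $U$ of $p$; by assumption $U$ meets $\supp(u_{x_{0}})$ at some point $q$ for some $x_{0}\in M$. Since $q\in\supp(u_{x_{0}})$, there is $g\in\Cc(U\cap P_{x_{0}})$ with $u_{x_{0}}(g)\neq 0$. Using a submersion chart around $q$, I would extend $g$ to some $F\in\Cc(U)$ with $F|_{P_{x_{0}}}=g$, giving $u(F)(x_{0})=u_{x_{0}}(g)\neq 0$ and hence $p\in\supp_{P}(u)$. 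For the opposite inclusion, if $p$ lies outside the closure, choose an open $U\ni p$ with $U\cap\supp(u_{x})=\emptyset$ for all $x\in M$; then for any $F\in\Cc(U)$ the restriction $F|_{P_{x}}$ is supported in $U\cap P_{x}$ and hence disjoint from $\supp(u_{x})$, forcing $u_{x}(F|_{P_{x}})=0$ for every $x$, so $u(F)=0$ and $p\notin\supp_{P}(u)$.

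For (2), I would attach to $u$ the $\C(M)$-linear map $T_{u}:\C(P)\to\C(M)$ defined by $T_{u}(F)(x)=u_{x}(F|_{P_{x}})$; comparing definitions, $\supp(T_{u})=\supp_{P}(u)$. The key fact is that $T_{u}$ is continuous, which I would establish via the closed graph theorem between the Fr\'{e}chet spaces $\C(P)$ and $\C(M)$: the restriction $\C(P)\to\C(P_{x})$ is continuous for each $x$, so if $F_{n}\to F$ in $\C(P)$ and $T_{u}(F_{n})\to g$ in $\C(M)$, evaluating at any $x$ and using continuity of $u_{x}\in\cE'(P_{x})$ yields $g(x)=u_{x}(F|_{P_{x}})=T_{u}(F)(x)$. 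Granting continuity, if $\supp(u)$ is compact then every $T_{u}(F)$ is supported in the fixed compact set $\supp(u)$, so $T_{u}$ factors continuously through $\Cc(\supp(u))\hookrightarrow\Cc(M)$ and lies in $\cE'_{\pi}(P)$; Proposition \ref{characterization of distributions with compact support} then gives compactness of $\supp(T_{u})=\supp_{P}(u)$. Conversely, if $\supp_{P}(u)$ is compact, part (1) yields $\{x:u_{x}\neq 0\}=\pi(\bigcup_{x}\supp(u_{x}))\subseteq\pi(\supp_{P}(u))$, a compact hence closed set, so $\supp(u)$ is closed inside a compact set and therefore compact. Equality $\pi(\supp_{P}(u))=\supp(u)$ follows by combining this with continuity of $\pi$, which gives $\pi(\supp_{P}(u))=\pi(\overline{\bigcup_{x}\supp(u_{x})})\subseteq\overline{\{x:u_{x}\neq 0\}}=\supp(u)$.

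I expect the main obstacle to be the extension step in (1): producing $F\in\Cc(U)$ with prescribed restriction $g$ to the fibre $P_{x_{0}}$ while keeping $\supp(F)\subseteq U$. This is where the submersion hypothesis really bites, via a local product chart around $q$ together with a transverse bump cut-off. The closed graph argument for $T_{u}$ and the appeal to Proposition \ref{characterization of distributions with compact support} are then routine consequences of the framework already in place.
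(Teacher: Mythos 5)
Your proof of part (1) is essentially the paper's, arguing both inclusions the same way. For part (2), however, your argument is correct but follows a genuinely different route: you package the smooth family $u$ into the $\Cc(M)$-linear map $T_{u}\colon\C(P)\to\C(M)$, $T_{u}(F)(x)=u_{x}(F|_{P_{x}})$, prove its continuity by the closed graph theorem between Fr\'{e}chet spaces, observe that compactness of $\supp(u)$ forces the image into the closed subspace $\Cc(\supp(u))$ so that $T_{u}\in\cE'_{\pi}(P)$, and then invoke Proposition~\ref{characterization of distributions with compact support} to conclude that $\supp(T_{u})=\supp_{P}(u)$ is compact. The paper instead argues this implication by contradiction: assuming $\supp_{P}(u)$ is not compact, it extracts a sequence $(p_{n})\subset\supp_{P}(u)$ with pairwise disjoint neighbourhoods $U_{n}$, chooses $F_{n}\in\Cc(U_{n})$ so that $u(F_{n})\geq 0$ with $\max u(F_{n})=n$, forms $F=\sum_{n}F_{n}$, and notes that $u(F)=\sum_{n}u(F_{n})$ is then unbounded on $M$, contradicting $u(F)\in\Cc(M)$. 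Your route is cleaner and anticipates both the isomorphism $\Phi\colon\cE'_{\pi}(P)\to\Cc(\prod_{x\in M}\cE'(P_{x}))$ established just afterwards and the automatic-continuity theme of \cite{Kal22}, at the cost of invoking the closed graph theorem and Proposition~\ref{characterization of distributions with compact support}; the paper's argument is more elementary and self-contained, though it needs some care with local finiteness of the $U_{n}$ (so that $F$ is smooth) and with the module-action rescaling used to normalize the $u(F_{n})$. The converse implication and the identity $\pi(\supp_{P}(u))=\supp(u)$ are handled in essentially the same way in both proofs, via part (1) and continuity of $\pi$.
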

\begin{proof}
$(1)$ Let us denote $A=\overline{\bigcup_{x\in M}\supp(u_{x})}$. To show that $A\subset\supp_{P}(u)$, pick any $p\in A$ and arbitrary
neighbourhood $U$ of $p$. We can then find $x'\in\pi(U)$ and $p'\in U\cap\supp(u_{x'})$. From definition it follows that there exists
$F_{x'}\in\C(P_{x'})$ with compact support in $U\cap P_{x'}$ such that $u_{x'}(F_{x'})\neq 0$. If $F\in\Cc(P)$ is an extension of $F_{x'}$ with
$\supp(F)\subset U$, it follows that $u(F)(x')\neq 0$, which shows that $p\in\supp_{P}(u)$.

To prove the inverse inclusion, pick $p\in\supp_{P}(u)$ and any neighbourhood $U$ of $p$. Then there exists $F\in\Cc(U)$ such that
$u_{x}(F|_{P_{x}})\neq 0$ for some $x\in\pi(U)$. This implies that $U\cap\supp(u_{x})\neq\emptyset$ and hence $p\in A$.

$(2)$ By using contradiction we first prove that compactness of $\supp(u)$ implies compactness of $\supp_{P}(u)$. Suppose therefore that $\supp(u)$
is compact and that $\supp_{P}(u)$ is not compact. Then we can find a sequence of points $(p_{n})$ in $\supp_{P}(u)$, 
together with pairwise disjoint open neighbourhoods $U_{n}$ of $p_{n}$. 
Since $p_{n}\in\supp_{P}(u)$, there exists a function $F_{n}\in\Cc(U_{n})$ such that $u(F_{n})\neq 0$. The function $u(F_{n})$ lies in 
$\Cc(M)$ and by $\Cc(M)$-linearity of $u$ we
can furthermore assume that $u(F_{n})$ is nonnegative and that $\max(u(F_{n}))=n$.
Since the sets $U_{n}$ are pairwise disjoint, the function $F:P\to\CC$, defined by $F=\sum_{n=1}^{\infty}F_{n}$, is well defined and smooth, hence
$u(F)\in\Cc(M)$ by our assumption on compactness of $\supp(u)$. Now note that for any $x\in M$ the continuity of $u_{x}:\C(P_{x})\to\CC$ implies
\[
u(F)(x)=u_{x}(F|_{P_{x}})=u_{x}\left(\sum_{n=1}^{\infty}F_{n}|_{P_{x}}\right)=\sum_{n=1}^{\infty}u_{x}(F_{n}|_{P_{x}})
=\sum_{n=1}^{\infty}u(F_{n})(x),
\]
which shows that the sum $\sum_{n=1}^{\infty}u(F_{n})$ converges pointwise on $M$ to $u(F)$. Since this sum is unbounded on $M$,
this leads us into contradiction.

Suppose now that $\supp_{P}(u)$ is compact. We then have
\[
\pi(\supp_{P}(u))=\pi\left(\overline{\bigcup_{x\in M}\supp(u_{x})}\right)
=\overline{\pi\left(\bigcup_{x\in M}\supp(u_{x})\right)}=\supp(u)
\]
hence $\supp(u)$ is compact as well.
\end{proof}

We are thus led to define the space of compactly supported smooth families as follows.

\begin{dfn}
Let $\pi:P\to M$ be a surjective submersion.
A smooth family $u\in\C(\prod_{x\in M}\cE'(P_{x}))$ has \textbf{compact support} if $\supp(u)$ or equivalently $\supp_{P}(u)$ is
a compact set. The vector space of compactly supported smooth families will be denoted by $\Cc(\prod_{x\in M}\cE'(P_{x}))$.
\end{dfn}

In the remainder of this section we will show that we have a natural isomorphism between
vector spaces $\cE'_{\pi}(P)$ and $\Cc(\prod_{x\in M}\cE'(P_{x}))$.

For any closed submanifold $N$ of $P$ we denote by 
$I_{N}=\{F\in\C(P)\,|\,F|_{N}=0\}$ the ideal of functions that vanish on $N$. Furthermore,
for every $x\in M$ we denote by $\ev_{x}:\C(M)\to\CC$ the evaluation at the point $x$. The kernel of $\ev_{x}$ is
the maximal ideal $\fm_{x}=\{f\in\C(M)\,|\,f(x)=0\}$ of functions that vanish at $x$.
By using Taylor expansion it was observed in \cite{LeMaVa17} that any $F\in I_{P_{x}}$ can be written in the form
\[
F=f_{1}\cdot F_{1}+\ldots+f_{k}\cdot F_{k}
\]
for some functions $F_{1},\ldots,F_{k}\in\C(P)$ and $f_{1},\ldots,f_{k}\in\fm_{x}$. This gives us for every
$x\in M$ the equality
\[
\fm_{x}\cdot\C(P)=I_{P_{x}}.
\]
Now choose any $T\in\cE'_{\pi}(P)$ and any $F\in I_{P_{x}}$. From $\C(M)$-linearity of $T$ it follows 
\[
(\ev_{x}\circ T)(F)=T(f_{1}\cdot F_{1}+\ldots+f_{k}\cdot F_{k})(x)=f_{1}(x)T(F_{1})(x)+\ldots+f_{k}(x)T(F_{k})(x)=0,
\]
hence we obtain the induced continuous map $\ev_{x}\circ T:\C(P)/I_{P_{x}}\to\CC$. By composing this induced map 
with the natural isomorphism of locally convex spaces $\C(P)/I_{P_{x}}\cong\C(P_{x})$ we get for every $x\in M$ a continuous linear map
\[
T_{x}:\C(P_{x})\to\CC.
\]
Since $P_{x}$ is a closed submanifold of $P$, every function in $\C(P_{x})$ is a restriction of a function $F\in\C(P)$ and $T_{x}$
is then defined by $T_{x}(F|_{P_{x}})=T(F)(x)$, independent of the choice of $F$. It follows directly from the definition
that $(T_{x})_{x\in M}\in\prod_{x\in M}\cE'(P_{x})$ is a smooth compactly supported family and that there is a $\Cc(M)$-linear map
\[
\Phi:\cE'_{\pi}(P)\to\Cc(\prod_{x\in M}\cE'(P_{x})),
\]
given by
\[
\Phi(T)=(T_{x})_{x\in M}.
\]

To see that $\Phi$ is an isomorphism, we first consider a special case 
when $P=M\times N$ is a trivial bundle over $M$ with fiber $N$ and $\pi$ is the projection onto $M$.
In this case we can also define the vector space
\[
\Cc(M,\cE'(N))
\]
of all smooth compactly supported functions on $M$ with values in the locally convex space $\cE'(N)$. 
As shown in \cite{LeMaVa17} (see also \cite{Kal22}) we have the following basic result.

\begin{prop}\label{Isomorphism Phi for trivial bundles}
Let $P=M\times N$ and let $\pi:M\times N\to M$ be the projection. Then we have natural isomorphisms of vector spaces
\[
\cE'_{\pi}(P)\cong\Cc(\prod_{x\in M}\cE'(P_{x}))\cong\Cc(M,\cE'(N)).
\]
\end{prop}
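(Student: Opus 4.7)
The plan is to split the statement into the two separate isomorphisms and treat them in turn.

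For the first isomorphism $\cE'_{\pi}(P)\cong\Cc(\prod_{x\in M}\cE'(P_{x}))$, I would construct an explicit two-sided inverse $\Psi$ to the map $\Phi$ defined just before the proposition. Given $u\in\Cc(\prod_{x\in M}\cE'(P_{x}))$, set $\Psi(u)(F)(x)=u_{x}(F|_{P_{x}})$ for $F\in\C(P)$. Then $\Psi(u)(F)$ is smooth on $M$ by the very definition of a smooth family, and its support is contained in $\supp(u)$, which is compact by assumption, so $\Psi(u)(F)\in\Cc(M)$. The $\Cc(M)$-linearity of $\Psi(u)$ is immediate from the pointwise definition of the module action on families, and continuity can be verified by standard seminorm estimates on compact subsets of $P$ above $\supp(u)$. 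The two identities $\Phi\com\Psi=\id$ and $\Psi\com\Phi=\id$ follow directly from the formulas, using that every function on the closed submanifold $P_{x}$ extends to a function in $\C(P)$. Note that this step works for an arbitrary surjective submersion and does not yet use triviality of the bundle.

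For the second isomorphism, triviality of the bundle yields a canonical identification $\cE'(P_{x})\cong\cE'(N)$ for every $x\in M$, so a family $(u_{x})_{x\in M}$ corresponds bijectively to a function $\tilde{u}:M\to\cE'(N)$; the two notions of compact support clearly match. It then remains to show that $u$ is a smooth family in the sense of Definition 3.1(3) if and only if $\tilde{u}$ is smooth as an $\cE'(N)$-valued map. The key tools are: reflexivity of $\C(N)$, which identifies $\cE'(N)'$ with $\C(N)$; completeness of the strong dual $\cE'(N)$, which via the Kriegl--Michor criterion recalled in the preliminaries reduces smoothness of $\tilde{u}$ to scalar smoothness; and the exponential law, which ensures that for each $F\in\C(M\times N)$ the map $\varphi_{F}:M\to\C(N)$, $x\mapsto F(x,\cdot)$, is smooth.

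The main obstacle is precisely this equivalence of smoothness conditions. The direction ``$u$ smooth family $\Rightarrow$ $\tilde{u}$ smooth'' is the easier one: applying $u$ to test functions of the form $F=g\com\pr_{N}$ for $g\in\C(N)$ shows that $x\mapsto\tilde{u}(x)(g)$ is smooth for every $g\in\C(N)$, which by the first two tools gives smoothness of $\tilde{u}$. The reverse direction is subtler, since an arbitrary $F\in\C(M\times N)$ is not a pullback from $N$. Here I would rewrite $x\mapsto u_{x}(F|_{P_{x}})=\tilde{u}(x)(\varphi_{F}(x))$ as the composition of the map $x\mapsto(\tilde{u}(x),\varphi_{F}(x))\in\cE'(N)\times\C(N)$ with the evaluation pairing $\ev:\cE'(N)\times\C(N)\to\CC$. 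Smoothness of the composition then follows by a product-rule argument based on the hypocontinuity of $\ev$ (it is separately continuous, with $\C(N)$ a barrelled Fr\'{e}chet space) together with the already established smoothness of $\tilde{u}$ and $\varphi_{F}$.
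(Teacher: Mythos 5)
The paper itself cites \cite{LeMaVa17} and \cite{Kal22} for this proposition rather than giving a proof, and only records that the first isomorphism is $\Phi$ and the second the fibrewise identification $\cE'(P_{x})\cong\cE'(N)$; your outline is consistent with that description. Your treatment of the second isomorphism is correct in both directions: the identification $\cE'(N)'\cong\C(N)$, the Kriegl--Michor scalar-smoothness criterion together with completeness of $\cE'(N)$, and, for the converse, hypocontinuity of the evaluation pairing combined with the exponential law are exactly the right ingredients.

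The one place you are too casual is the continuity of $\Psi(u):\C(P)\to\Cc(M)$, which is in fact the entire content of surjectivity of $\Phi$. The ``standard seminorm estimates'' you invoke require a Leibniz expansion of $D^{\alpha}_{x}\bigl(u_{x}(F|_{P_{x}})\bigr)$ and hence uniform control on the vector-valued derivatives $(D^{\beta}u)_{x}$ over compacta, which only makes sense after you have identified $u$ with a smooth $\cE'(N)$-valued map; that is exactly the second isomorphism, which does use triviality. So your claim that the first step ``does not yet use triviality'' is in tension with your own proposed method, and as written the logical order is reversed. Two clean fixes: either establish the identification $\Cc(\prod_{x}\cE'(P_{x}))\cong\Cc(M,\cE'(N))$ first and then run the seminorm estimate, or, if you genuinely want an argument independent of triviality, observe that $\Psi(u)$ is a linear map between the Fr\'{e}chet spaces $\C(P)$ and $\Cc(\supp(u))$ whose graph is closed (convergence in either space implies pointwise convergence in $x$, and each $u_{x}$ is continuous on $\C(P_{x})$), so the closed graph theorem gives continuity directly for an arbitrary surjective submersion.
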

The first isomorphism is given by the map $\Phi$, as described above, while for the second one we have to identify $\cE'(P_{x})=\cE'(\{x\}\times N)$
with $\cE'(N)$ for each $x\in M$ and then interpret a smooth family as a smooth $\cE'(N)$-valued map on $M$.

Now choose an open subset $U$ of $P$. We will denote by 
\[
\cE'_{\pi}(U)=\{T\in\cE'_{\pi}(P)\,|\,\supp(T)\subset U\}
\]
the subspace of $\cE'_{\pi}(P)$, consisting of distributions with support contained in $U$. We can also consider the restriction $\pi|_{U}:U\to\pi(U)$,
which is again a surjective submersion. We then have a continuous injective map 
\[
\cE'_{\pi|_{U}}(U)\hookrightarrow\cE'_{\pi}(P)
\]
with image $\cE'_{\pi}(U)$. In 
particular, vector spaces $\cE'_{\pi|_{U}}(U)$ and $\cE'_{\pi}(U)$ are isomorphic, but in general the topology on 
$\cE'_{\pi|_{U}}(U)$ is finer than the topology on $\cE'_{\pi}(U)$ induced from $\cE'_{\pi}(P)$.

\begin{theo}
For every surjective submersion $\pi:P\to M$, the map 
\[
\Phi:\cE'_{\pi}(P)\to\Cc(\prod_{x\in M}\cE'(P_{x})),
\]
defined by $\Phi(T)=(T_{x})_{x\in M}$, is an isomorphism of $\Cc(M)$-modules. 
\end{theo}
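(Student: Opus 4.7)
The approach is to split $\Phi$ into its injective and surjective halves, dispose of injectivity directly, and reduce surjectivity to the trivial-bundle case of Proposition~\ref{Isomorphism Phi for trivial bundles} via a partition of unity. Injectivity is immediate from the defining identity $T(F)(x) = T_x(F|_{P_x})$ recalled before the statement: if $\Phi(T) = 0$ then $T(F) \equiv 0$ for every $F \in \C(P)$, so $T = 0$. ($\Cc(M)$-linearity of $\Phi$ is equally routine, since the $\Cc(M)$-action on $\C(P)$ factors through $\pi$, giving $(fF)|_{P_x} = f(x) F|_{P_x}$ and hence $(f \cdot T)_x = f(x) T_x$.)

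For surjectivity, fix $u \in \Cc(\prod_{x \in M} \cE'(P_x))$. Since $\pi$ is a submersion, each point of $P$ has an open neighbourhood on which $\pi$ is diffeomorphic to a projection $U \times W \to U$, so the compact set $\supp_P(u)$ can be covered by finitely many such trivializing opens $V_1, \ldots, V_k$, with $\pi|_{V_i} : V_i \to U_i$ of this form. Fix $\psi_i \in \Cc(V_i)$ with $\sum_i \psi_i \equiv 1$ on an open neighbourhood of $\supp_P(u)$, and for each $i$ define
\[
u^{(i)}_x(h) = u_x(\psi_i|_{P_x} \cdot h), \qquad h \in \C((V_i)_x),
\]
reading $\psi_i|_{P_x} \cdot h$ as a smooth compactly supported function on $P_x$ via extension by zero, which is well posed because $\psi_i|_{P_x}$ is compactly supported in $(V_i)_x$. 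For any $H \in \C(V_i)$ one has $u^{(i)}_x(H|_{(V_i)_x}) = u(\widetilde{\psi_i H})(x)$, where $\widetilde{\psi_i H} \in \Cc(P)$ is the extension by zero of $\psi_i H$ from $V_i$ to $P$; this simultaneously shows that $u^{(i)}$ is a smooth family on $V_i$ and that its total support lies in the compact set $\supp_P(u) \cap \supp(\psi_i) \subset V_i$. Proposition~\ref{Isomorphism Phi for trivial bundles} applied to the trivial bundle $\pi|_{V_i}$ then produces $T^{(i)} \in \cE'_{\pi|_{V_i}}(V_i)$ with $\Phi(T^{(i)}) = u^{(i)}$, and the injection $\cE'_{\pi|_{V_i}}(V_i) \hookrightarrow \cE'_\pi(P)$ recalled before the statement lets us view each $T^{(i)} \in \cE'_\pi(P)$. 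Set $T = \sum_i T^{(i)}$.

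It remains to verify $\Phi(T) = u$. Unwinding the definitions yields
\[
T(F)(x) = \sum_i T^{(i)}(F)(x) = u_x\bigl(\textstyle\sum_i \psi_i|_{P_x} F|_{P_x}\bigr),
\]
and since $\sum_i \psi_i \equiv 1$ on a neighbourhood of $\supp(u_x) \subset \supp_P(u)$, the function $(1 - \sum_i \psi_i)|_{P_x} \cdot F|_{P_x}$ vanishes on a neighbourhood of $\supp(u_x)$ and is therefore annihilated by the distribution $u_x$; hence $T(F)(x) = u_x(F|_{P_x}) = u(F)(x)$, as required. I expect the main obstacle to be the technical bookkeeping---arranging the local family $u^{(i)}$ on the restricted submersion $V_i \to U_i$ so that it is genuinely smooth and compactly supported, and matching up the extension-by-zero operations that pass between $\cE'_{\pi|_{V_i}}(V_i)$ and $\cE'_\pi(P)$. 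Once these are in place, the finite partition of unity together with the support-based annihilation above finishes the argument cleanly.
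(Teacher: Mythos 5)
Your proof is correct and follows essentially the same route as the paper: both reduce surjectivity to Proposition~\ref{Isomorphism Phi for trivial bundles} by covering $\supp_P(u)$ with finitely many trivializing opens and decomposing $u$ via a subordinate partition of unity acting through the $\C(P)$-module structure ($u^{(i)}=\psi_i\cdot u$ is exactly the paper's $u_i$). You simply spell out the bookkeeping (the extension-by-zero identifications and the support-based verification $\Phi(T)=u$) that the paper leaves implicit.
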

\begin{proof}
It follows directly from the definition that $\Phi$ is injective and $\Cc(M)$-linear, so we have to show that
it is surjective. Take any $u\in\Cc(\prod_{x\in M}\cE'(P_{x}))$. Since $\pi$ is a submersion and $\supp_{P}(u)\subset P$ is compact, we can cover $\supp_{P}(u)$ 
with open subsets $U_{1},U_{2},\ldots,U_{k}$ of $P$, such that $\pi$ restricted to each $U_{i}$ is a trivial fiber bundle. By using a suitable
partition of unity and $\C(P)$-module structure on $\Cc(\prod_{x\in M}\cE'(P_{x}))$, we can decompose 
\[
u=u_{1}+u_{2}+\ldots+u_{k},
\]
such that $\supp_{P}(u_{i})\subset U_{i}$ for each $i$. 
If we choose trivializations $U_{i}\approx \pi(U_{i})\times N_{i}$ for restrictions $\pi|_{U_{i}}$,
we can view $u_{i}$ as an element
of the space $\Cc(\pi(U_{i}),\cE'(N_{i}))$. Proposition \ref{Isomorphism Phi for trivial bundles} gives us a series of isomorphisms
\[
\Cc(\pi(U_{i}),\cE'(N_{i}))\cong\cE'_{\pi|_{U_{i}}}(U_{i})\cong\cE'_{\pi}(U_{i}).
\]
We then have $u=\Phi(T_{1}+\ldots+T_{k})$, where $T_{i}\in\cE'_{\pi}(U_{i})\subset\cE'_{\pi}(P)$ is the element, 
corresponding to $u_{i}$ under the above isomorphism.
\end{proof}

Let us mention that one can define a locally convex topology on the space $\Cc(\prod_{x\in M}\cE'(P_{x}))$, so that $\Phi$ becomes an isomorphism
of locally convex spaces (see \cite{LeMaVa17} for more details). However, in our case we will be interested 
in a particular description of topology on the space $\Cc(\RR^{l},\cE'(N))$.

We start by describing a basis of neighbourhoods of zero in $\Cc(\RR^{l})$ (see \cite{Hor67} for more details). 
Denote by $K_{0}=\emptyset\subset K_{1}\subset K_{2}\subset\ldots$ exhaustion of $\RR^{l}$ by balls $K_{n}=\{x\in\RR^{l}\,|\,|x|\leq n\}$ 
with centres at zero and radius $n\in\NN$. Furthermore, denote by $\textbf{m}=(m_{1},m_{2},\ldots)$
an increasing sequence of natural numbers and by $\textbf{e}=(\epsilon_{1},\epsilon_{2},\ldots)$ a decreasing
sequence of positive real numbers. The neighbourhood basis of zero for the LF-topology on $\Cc(\RR^{l})$
then consists of the sets of the form
\[
V_{\textbf{m},\textbf{e}}=\{f\in\Cc(\RR^{l})\,|\,|D^{\alpha}f(x)|<\epsilon_{n}\text{ for }x\in K_{n-1}^{c}\text{ and }|\alpha|\leq m_{n}\},
\]
as $\textbf{m}$ and $\textbf{e}$ vary over all sequences as above. In this spirit we also define a basis of neighbourhoods
of zero in $\Cc(\RR^{l},\cE'(N))$. For any bounded subset $B\subset\C(N)$ we define a seminorm $p_{B}$ on $\cE'(N)$ by 
\[
p_{B}(v)=\sup_{F\in B}|v(F)|
\] 
for $v\in\cE'(N)$. For any increasing sequence $\textbf{B}=(B_{1},B_{2},\ldots)$ of bounded subsets of $\C(N)$ and 
$\textbf{m}$ and $\textbf{e}$ as above we define
\[
V_{\textbf{B},\textbf{m},\textbf{e}}=\{u\in\Cc(\RR^{l},\cE'(N))\,|\,
p_{B_{n}}((D^{\alpha}u)_{x})<\epsilon_{n}\text{ for }x\in K_{n-1}^{c}\text{ and }|\alpha|\leq m_{n}\}.
\]
All such sets form a basis of neighbourhoods of zero for a locally convex topology on $\Cc(\RR^{l},\cE'(N))$ and we have
the following proposition.

\begin{prop}
Let $\pi:\RR^{l}\times N\to \RR^{l}$ be the projection onto $\RR^{l}$. The map 
\[
\Phi:\cE'_{\pi}(\RR^{l}\times N)\to\Cc(\RR^{l},\cE'(N))
\]
is then an isomorphism of locally convex spaces.
\end{prop}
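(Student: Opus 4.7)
My plan is: the previous theorem already provides the algebraic bijection $\Phi$, so I need only verify that both $\Phi$ and $\Phi^{-1}$ are continuous with respect to the stated topologies. The key tool throughout is the Leibniz-type identity
\[
D^\alpha(T(F))(x) = \sum_{\beta \leq \alpha} \binom{\alpha}{\beta}(D^\beta u)_x\bigl((D_x^{\alpha - \beta} F)|_{\{x\} \times N}\bigr),
\]
valid for $F \in \C(\RR^l \times N)$, $T \in \cE'_\pi(\RR^l \times N)$, and $u = \Phi(T)$, where $D_x^\gamma$ denotes differentiation in the first factor only. I would obtain it from the fact that both $u \colon \RR^l \to \cE'(N)$ and the $\C(N)$-valued map $x \mapsto F|_{\{x\} \times N}$ are smooth, combined with the Leibniz rule for the $\cE'(N) \times \C(N) \to \CC$ duality pairing. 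Its specialization to $F = \pi^* g$ with $g \in \C(N)$ gives $(D^\alpha u)_x(g) = D^\alpha(T(\pi^* g))(x)$.

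For continuity of $\Phi$, given $V_{\mathbf{B}, \mathbf{m}, \mathbf{e}}$, I would construct a neighborhood $K(B', V')$ in $\cE'_\pi$ mapping into it by taking $B' = \bigcup_n \lambda_n \pi^* B_n$ and $V' = V_{\mathbf{m}, \mathbf{e}'}$ with $\epsilon'_n = \lambda_n \epsilon_n/2$, where the positive scalars $\lambda_n$ are chosen by a diagonal argument against the countable seminorm base of $\C(\RR^l \times N)$ so that $B'$ is bounded (each $\pi^* B_n$ is bounded because $\pi^*$ is continuous). For $T \in K(B', V')$, the membership $\lambda_n \pi^* g \in B'$ for $g \in B_n$ combined with the $n$-th requirement of $V'$ yields $|D^\alpha T(\pi^* g)(x)| < \epsilon_n/2$; taking the supremum over $g \in B_n$ and invoking the specialized identity gives $p_{B_n}((D^\alpha u)_x) \leq \epsilon_n/2 < \epsilon_n$ for $x \in K_{n-1}^c$, $|\alpha| \leq m_n$, so $u \in V_{\mathbf{B}, \mathbf{m}, \mathbf{e}}$.

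For continuity of $\Phi^{-1}$, given $K(B, V)$ with $V = V_{\mathbf{m}', \mathbf{e}'}$, I would set $m_n = m'_n$, $\epsilon_n = \epsilon'_n/2^{m'_n+1}$, and
\[
B_n = \bigcup_{k \leq n}\bigl\{(D_x^\gamma F)|_{\{x\} \times N} : F \in B,\ x \in K_k,\ |\gamma| \leq m'_k\bigr\}.
\]
Each $B_n$ is bounded in $\C(N)$, because the boundedness of $B$ in $\C(\RR^l \times N)$ controls $\sup_{F \in B} p_{K_n \times L,\, m}(F)$ for every compact $L \subset N$. Given $u \in V_{\mathbf{B}, \mathbf{m}, \mathbf{e}}$, $F \in B$, $x \in K_{n-1}^c$, and $|\alpha| \leq m'_n$, I would let $N(x)$ be the smallest index with $x \in K_{N(x)}$, so that $N(x) \geq n$ and $x \in K_{N(x)-1}^c$. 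The $N(x)$-th condition of $V_{\mathbf{B}, \mathbf{m}, \mathbf{e}}$ applies because each $(D_x^{\alpha - \beta} F)|_{\{x\} \times N}$ lies in $B_{N(x)}$ by construction, and the Leibniz identity then yields $|D^\alpha T(F)(x)| \leq 2^{|\alpha|} \epsilon_{N(x)} \leq \epsilon'_{N(x)}/2 < \epsilon'_n$, so $T(F) \in V$.

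The main obstacle is the $\Phi^{-1}$ direction: boundedness of $B$ in $\C(\RR^l \times N)$ concerns only compact subsets of the total space, so it does not yield a single bounded subset of $\C(N)$ controlling the fibrewise restrictions $(D_x^\gamma F)|_{\{x\} \times N}$ uniformly over all $x \in \RR^l$. The resolution is the stratification of $\RR^l$ by the annuli $K_n \setminus K_{n-1}$ and the use of the $n$-th constraint of $V_{\mathbf{B}, \mathbf{m}, \mathbf{e}}$ precisely on the $n$-th annulus, where the relevant restrictions do lie in a bounded set.
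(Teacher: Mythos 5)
Your proposal is correct and follows essentially the same route as the paper: the same specialization $(D^{\alpha}u)_x(g)=D^{\alpha}(T(\pi^{*}g))(x)$ for the forward direction, with the same diagonal scaling of the $B_n$ to obtain a bounded subset of $\C(\RR^{l}\times N)$; and the same Leibniz expansion together with the annular stratification of $\RR^{l}$ (your index $N(x)$ makes precise the paper's informal ``$(D^{\gamma}F)_{x}\in B_{n'}$ for some $n'\geq n$'') and the same $B_n$ built from fibrewise derivatives over $K_n$ for the openness direction. The only visible difference is that you correctly retain the binomial coefficients in the Leibniz identity, which the paper's display drops; since the paper bounds the number of terms by $2^{m_n}$ anyway, this slip does not affect its conclusion, and your $2^{m'_n+1}$ normalisation handles it cleanly.
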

\begin{proof}
We will use the isomorphism $\C(\RR^{l}\times N)\cong\C(\RR^{l},\C(N))$ under which
a function $F\in\C(\RR^{l}\times N)$ is identified with the smooth $\C(N)$-valued function on $\RR^{l}$, given by $x\mapsto F_{x}$,
where $F_{x}=F|\{x\}\times N$. This will enable us to compute partial derivatives of such functions in the directions on the base manifold.

First we show that $\Phi$ is continuous. Take any basic neighbourhood $V_{\textbf{B},\textbf{m},\textbf{e}}$ of zero
in $\Cc(\RR^{l},\cE'(N))$. We can then find a sequence $(\mu_{n})$ of positive real numbers, such that $\bigcup_{n=1}^{\infty}\mu_{n}B_{n}$
is a bounded subset of $\C(N)$. If we denote by $\pi_{N}:\RR^{l}\times N\to N$ the projection onto $N$, the set 
$B=\pi_{N}^{*}\left(\bigcup_{n=1}^{\infty}\mu_{n}B_{n}\right)$ is then a bounded subset of $\C(\RR^{l}\times N)$. Any function
in $B$ is of the form $F\com\pi_{N}$ for some $F\in\bigcup_{n=1}^{\infty}\mu_{n}B_{n}$ and hence corresponds to a
constant map $x\mapsto F$ in $\C(\RR^{l},\C(N))$. For any $u\in\Cc(\RR^{l},\cE'(N))$ and any multi-index $\alpha$ we thus have
\[
D^{\alpha}(u(F\com\pi_{N}))(x)=D^{\alpha}(x\mapsto u_{x}(F))(x)=(D^{\alpha}u)_{x}(F).
\]
Now define a sequence $\textbf{e}'$ by $\epsilon_{n}'=\frac{\mu_{n}\epsilon_{n}}{2}$ and take any $T\in K(B,V_{\textbf{m},\textbf{e}'})$.
For any $x\in K_{n-1}^{c}$ and any multi-index $\alpha$ with $|\alpha|\leq m_{n}$ we have
\begin{align*}
p_{B_{n}}((D^{\alpha}\Phi(T))_{x})&=\sup_{F\in B_{n}}|(D^{\alpha}\Phi(T))_{x}(F)|
=\sup_{F\in B_{n}}|D^{\alpha}(T(F\com\pi_{N}))(x)|, \\
&=\frac{1}{\mu_{n}}\sup_{F\in \mu_{n}B_{n}}|D^{\alpha}(T(F\com\pi_{N}))(x)|, \\
&\leq \frac{1}{\mu_{n}}\sup_{G\in B}|D^{\alpha}(T(G))(x)|.
\end{align*}
Since $G\in B$ and $T\in K(B,V_{\textbf{m},\textbf{e}'})$, it follows that $T(G)\in V_{\textbf{m},\textbf{e}'}$. So for 
$x\in K_{n-1}^{c}$ and any multi-index $\alpha$ with $|\alpha|\leq m_{n}$ we have $|D^{\alpha}(T(G))(x)|<\epsilon_{n}'$, which shows that
\[
p_{B_{n}}((D^{\alpha}\Phi(T))_{x})\leq \frac{1}{\mu_{n}}\sup_{G\in B}|D^{\alpha}(T(G))(x)|<\epsilon_{n}
\] 
and hence $\Phi(K(B,V_{\textbf{m},\textbf{e}'}))\in V_{\textbf{B},\textbf{m},\textbf{e}}$.

To see that $\Phi$ is an open map, take any basic neighbourhood $K(B,V_{\textbf{m},\textbf{e}})$ of zero
in $\cE'_{\pi}(\RR^{l}\times N)$. Now define a sequence $(B_{n})$ of bounded subsets of $\C(N)$ by
\[
B_{n}=\{(D^{\gamma}F)_{x}\,|\,x\in K_{n},|\gamma|\leq m_{n},F\in B\}.
\]
Furthermore, define a sequence $\textbf{e}'$ by $\epsilon_{n}'=\frac{\epsilon_{n}}{2^{m_{n}}}$. For any 
$u\in V_{\textbf{B},\textbf{m},\textbf{e}'}$, any $x\in K_{n-1}^{c}$, any $F\in B$ and any multi-index $\alpha$ 
with $|\alpha|\leq m_{n}$ we then have
\begin{align*}
|D^{\alpha}(u(F))(x)|&=|D^{\alpha}(x\mapsto u_{x}(F_{x}))(x)|
=\left|\sum_{\alpha=\beta+\gamma}(D^{\beta}u)_{x}(D^{\gamma}F)_{x}\right|, \\
&\leq \sum_{\alpha=\beta+\gamma}|(D^{\beta}u)_{x}(D^{\gamma}F)_{x}|.
\end{align*}
By definition we have $(D^{\gamma}F)_{x}\in B_{n'}$ for some $n'\geq n$, so we have a bound 
\[
|(D^{\beta}u)_{x}(D^{\gamma}F)_{x}|\leq p_{B_{n'}}((D^{\beta}u)_{x})<\epsilon_{n'}'<\epsilon_{n}'.
\]
Since there are at most $2^{m_{n}}$ possibilities of writing $\alpha=\beta+\gamma$, we get that
\[
|D^{\alpha}(u(F))(x)|<\epsilon_{n}.
\]
This shows that $V_{\textbf{B},\textbf{m},\textbf{e}'}\subset\Phi(K(B,V_{\textbf{m},\textbf{e}}))$ which concludes the proof.
\end{proof}

\section{Reflexivity of the space of transversal distributions}

The space of $\pi$-transversal distributions $\cE'_{\pi}(P)$ is the strong $\Cc(M)$-dual of the algebra of smooth functions $\C(P)$. In 
particular, $\cE'_{\pi}(P)$ is again a $\Cc(M)$-module, so we can consider its strong $\Cc(M)$-dual. We will show in this section
that we have a canonical isomorphism between $\C(P)$ and the strong $\Cc(M)$-dual of $\cE'_{\pi}(P)$.

Let $M$ be a smooth manifold and let $\cM$ be a $\Cc(M)$-module. For any $x\in M$ we define 'evaluation' of $\cM$ at $x$ as the quotient
\[
\cM(x)=\frac{\cM}{\fm_{x}\cdot\cM}.
\] 
We will first show that for any smooth surjective submersion $\pi:P\to M$ and any $x\in M$ 
the space $\cE'_{\pi}(P)(x)$ is isomorphic to $\cE'(P_{x})$ as a locally convex vector space. 

\begin{prop}\label{Localization of transversal distributions}
Let $\pi:P\to M$ be a surjective submersion.
\begin{enumerate}
\item For every $x\in M$ we have 
\[
\fm_{x}\cdot\cE'_{\pi}(P)=\{T\in\cE'_{\pi}(P)\,|\,T_{x}=0\}.
\]
\item The linear map $\Phi_{x}:\cE'_{\pi}(P)\to\cE'(P_{x})$, defined by $\Phi_{x}(T)=T_{x}$, induces
an isomorphism of vector spaces $\Phi_{x}:\cE'_{\pi}(P)(x)\to\cE'(P_{x})$ for every $x\in M$.
\end{enumerate}
\end{prop}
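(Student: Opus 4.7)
The plan is to prove (1) first and then deduce (2). The forward inclusion of (1), $\fm_x \cdot \cE'_\pi(P) \subseteq \{T : T_x = 0\}$, is immediate from the $\C(M)$-module action extended from the $\Cc(M)$-action: for $f \in \fm_x$ and $S \in \cE'_\pi(P)$ one has $(fS)_x(F|_{P_x}) = (fS)(F)(x) = f(x)\,S(F)(x) = 0$.

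For the reverse inclusion, given $T$ with $T_x = 0$, I would work with the smooth compactly supported family $u = \Phi(T)$, which satisfies $u_x = 0$. Pick a chart $(U, \phi_1, \ldots, \phi_l)$ around $x$ with $\phi(x) = 0$ and a cutoff $\chi \in \Cc(U)$ equal to $1$ on a neighborhood of $x$, and split $u = \chi u + (1 - \chi) u$. Since $(1 - \chi)(x) = 0$, the second summand already lies in $\fm_x \cdot \cE'_\pi(P)$. To handle $\chi u$, cover the compact set $\supp_P(\chi u)$ by finitely many open $V_j \subset \pi^{-1}(U)$ over which $\pi$ trivializes as $V_j \cong \pi(V_j) \times N_j$, and take a partition of unity $(\psi_j)$ on $P$ subordinate to this cover with $\sum_j \psi_j \equiv 1$ on a neighborhood of $\supp_P(\chi u)$; then $\chi u = \sum_j \psi_j(\chi u)$, and each piece corresponds under the trivialization to a compactly supported smooth $\cE'(N_j)$-valued function $w_j$ on $\pi(V_j)$ vanishing at $x$. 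A vector-valued Hadamard lemma produces smooth $w_{j,i}$ with $w_j(y) = \sum_i \phi_i(y)\, w_{j,i}(y)$; trimming by an appropriate cutoff $\rho_j \in \Cc(\pi(V_j))$ equal to $1$ on $\supp(w_j)$ rewrites this as $w_j = \sum_i (\rho_j \phi_i)(\rho_j w_{j,i})$, where $\rho_j \phi_i \in \Cc(M) \cap \fm_x$ and $\rho_j w_{j,i}$ transports back, via the trivialization and $\Phi^{-1}$, to an element of $\cE'_\pi(V_j) \subset \cE'_\pi(P)$. Summing over $j$ places $\chi u$, and hence $u$, in $\fm_x \cdot \cE'_\pi(P)$.

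For part (2), injectivity of the induced map $\overline{\Phi}_x : \cE'_\pi(P)(x) \to \cE'(P_x)$ follows immediately from (1). For surjectivity, given $v \in \cE'(P_x)$, cover $\supp(v)$ by finitely many open $V_j \subset P$ on which $\pi$ trivializes as $V_j \cong W_j \times D_j$ with $x \in W_j$, and use a partition of unity on $P_x$ subordinate to $\{V_j \cap P_x\}$ to decompose $v = \sum_j v_j$ with $v_j$ supported in $V_j \cap P_x$; via the trivialization $v_j$ identifies with a compactly supported distribution on $D_j$. For each $j$, form the constant smooth family $y \mapsto v_j$ on $W_j$, multiply by a cutoff $\chi_j \in \Cc(W_j)$ with $\chi_j(x) = 1$, and let $T_j \in \cE'_\pi(V_j) \subset \cE'_\pi(P)$ correspond via the trivialization and $\Phi^{-1}$. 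Then $T = \sum_j T_j$ satisfies $T_x = \sum_j \chi_j(x) v_j = v$.

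The main obstacle is the vector-valued Hadamard lemma invoked in (1): for a smooth $\cE'(N)$-valued function $w$ on an open neighborhood of $0 \in \RR^l$ with $w(0) = 0$, one must produce smooth $w_i$ satisfying $w(y) = \sum_i y_i w_i(y)$. The natural formula $w_i(y) = \int_0^1 (\partial_i w)(ty)\,dt$ has to be interpreted as the integral of a continuous curve in the locally convex space $\cE'(N)$, and smoothness of $w_i$ verified by differentiating under the integral; this relies on enough completeness of $\cE'(N)$, or equivalently one may test scalarly against $F \in \C(N)$, apply the scalar Hadamard lemma to $y \mapsto w(y)(F)$, and check that the resulting coefficients define continuous linear functionals depending smoothly on $y$.
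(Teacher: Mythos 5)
Your argument follows essentially the same route as the paper's: split off the piece of $T$ away from the fibre $P_x$, reduce the remaining piece to trivializing charts, and invoke a vector-valued Hadamard/Taylor lemma. Two small remarks on the comparison and one genuine (but repairable) gap.

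On the far-from-$x$ piece, your observation that $1-\chi\in\fm_x$ already places $(1-\chi)u$ in $\fm_x\cdot\cE'_{\pi}(P)$ is in fact slightly cleaner than what the paper does: there, the analogous summand $\rho_{K^c}T$ (cut off on $P$ rather than on $M$) requires an extra step of producing $f_{K^c}\in\fm_x$ equal to $1$ near $\pi(\supp(\rho_{K^c}T))$. On the local piece, the paper covers only the compact set $K=\supp(T)\cap P_x$ by a \emph{single} trivializing open set $U\supset K$ (this uses the standard fact that a submersion admits a product neighbourhood over a compact piece of a fibre), which keeps the bookkeeping short; you instead cover all of $\supp_P(\chi u)$ by finitely many trivializing $V_j$ and take a partition of unity. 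Both work, but this is where your write-up has a gap: you assert that each $w_j$ "vanishes at $x$", but if $V_j\cap P_x=\emptyset$ then $x\notin\pi(V_j)$, so $w_j$ is not defined at $x$ and the Hadamard lemma (which needs $0$ in the domain and a star-shaped coordinate patch) does not apply. Those $j$ must be treated separately: since $\supp_P(\psi_j\chi u)$ is then compact and disjoint from $P_x$, pick $g_j\in\fm_x$ equal to $1$ near $\pi(\supp_P(\psi_j\chi u))$ and write $\psi_j\chi u=g_j\cdot(\psi_j\chi u)$. Alternatively, cover only $\supp_P(\chi u)\cap P_x$ by trivializing $V_j$ with $x\in\pi(V_j)$ and absorb the remainder into the far piece. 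Once that case split is made explicit the argument closes.

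Your worry about the vector-valued Hadamard lemma is well placed but not an obstacle: $\cE'(N)$ is the strong dual of the nuclear Fréchet space $\C(N)$, hence complete, so the Riemann integral $\int_0^1(\partial_i w)(ty)\,dt$ exists in $\cE'(N)$ and differentiating under the integral is legitimate. The paper simply cites \cite{KrMi97} for this. Part (2) is correct; again the paper uses a single trivializing $U\supset\supp(v)$ rather than a cover plus partition of unity, but both are fine.
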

\begin{proof}
$(1)$ From definition it follows that $\fm_{x}\cdot\cE'_{\pi}(P)\subset\{T\in\cE'_{\pi}(P)\,|\,T_{x}=0\}$, 
so we only have to show the inclusion in the other direction. Choose
any $T\in\cE'_{\pi}(P)$ such that $T_{x}=0$. The set $K=\supp(T)\cap P_{x}$ is then a compact subset of $P_{x}$, so we can
find an open neighbourhood $U$ of $K$ in $P$, such that $\pi|_{U}$ is a trivial fiber bundle. Choose a partition
of unity $\{\rho_{U},\rho_{K^{c}}\}$, subordinated to the open cover $\{U,K^{c}\}$ of $P$. We can then decompose
\[
T=\rho_{U}T+\rho_{K^{c}}T.
\]
By construction, the support of $\rho_{K^{c}}T$ is compact and disjoint from $P_{x}$. So we can find $f_{K^{c}}\in\fm_{x}$ such that $f_{K^{c}}\equiv 1$
on a neighbourhood of $\pi(\supp(\rho_{K^{c}}T))$, hence 
\[
\rho_{K^{c}}T=f_{K^{c}}\cdot(\rho_{K^{c}}T)\in\fm_{x}\cdot\cE'_{\pi}(P).
\] 
On the other hand, let us choose a trivialization
$U\approx \pi(U)\times N$ for $\pi|_{U}$. We can then consider $\rho_{U}T$ as a smooth $\cE'(N)$-valued function on $\pi(U)$ with zero at $x$.
Taylor expansion (see \cite{KrMi97}) now gives us $T_{1},\ldots,T_{k}\in\Cc(\pi(U),\cE'(N))\cong\cE'_{\pi}(U)$ and
functions $f_{1},\ldots,f_{k}\in\fm_{x}$, such that
\[
\rho_{U}T=f_{1}\cdot T_{1}+\ldots+f_{k}\cdot T_{k}\in\fm_{x}\cdot\cE'_{\pi}(P).
\]

$(2)$ First we show that $\Phi_{x}:\cE'_{\pi}(P)\to\cE'(P_{x})$ is surjective. 
Choose any $v\in\cE'(P_{x})$ with $\supp(v)=K\subset P_{x}$. Similarly as above, let us choose an 
open neighbourhood $U\approx \pi(U)\times N$ of $K$ such that $\pi|_{U}$ is a trivial fiber bundle. Choose $f\in\Cc(\pi(U))$,
such that $f(x)=1$, and define $\overline{v}\in\Cc(\pi(U),\cE'(N))\cong\cE'_{\pi}(U)\subset\cE'_{\pi}(P)$ by 
\[
\overline{v}_{y}=f(y)v
\]
for $y\in\pi(U)$. We then have $\Phi_{x}(\overline{v})=v$. 

Since $\ker(\Phi_{x})=\{T\in\cE'_{\pi}(P)\,|\,T_{x}=0\}$, we get by $(1)$ the induced isomorphism
\[
\Phi_{x}:\cE'_{\pi}(P)(x)\to\cE'(P_{x}).
\]
\end{proof}
To show that the map $\Phi_{x}$ is an isomorphism of locally convex spaces, we need the following lemma.
Let $N$ be a smooth manifold and $K\subset N$ a compact subset. We then denote by $\cE'(K)$ 
the vector subspace of $\cE'(N)$ consisting of all distributions with support contained in $K$.

\begin{lem}\label{Limit structure on compactly supported distributions}
Let $N$ be a smooth manifold and let $V\subset\cE'(N)$ be an absolutely convex set. Then $V$ is a neighbourhood of zero in $\cE'(N)$
if and only if $V\cap\cE'(K)$ is a neighbourhood of zero in $\cE'(K)$ for each compact subset $K$ of $N$. 
\end{lem}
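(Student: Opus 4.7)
The plan is to prove the two directions separately. The forward implication is immediate: by construction $\cE'(K)\subset\cE'(N)$ carries the subspace topology, so if $V$ is a neighbourhood of zero in $\cE'(N)$, then $V\cap\cE'(K)$ is automatically a neighbourhood of zero in $\cE'(K)$.

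For the reverse implication, suppose $V$ is absolutely convex and meets every $\cE'(K)$ in a neighbourhood of zero. I would first establish the auxiliary fact that every bounded subset $A$ of $\cE'(N)$ is contained in $\cE'(K)$ for some compact $K\subset N$. Indeed, since $\C(N)$ is a Fr\'{e}chet space, the Banach-Steinhaus theorem implies that any strongly bounded $A\subset\cE'(N)$ is equicontinuous: there exist a compact $K\subset N$, an integer $m\in\NN_{0}$ and $C>0$ with $|u(\phi)|\leq Cp_{K,m}(\phi)$ for all $u\in A$ and $\phi\in\C(N)$. Any $\phi$ that vanishes on a neighbourhood of $K$ has $p_{K,m}(\phi)=0$, hence $u(\phi)=0$ for all $u\in A$, forcing $\supp(u)\subset K$ and so $A\subset\cE'(K)$.

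Combined with the hypothesis, this immediately yields that $V$ absorbs every bounded subset of $\cE'(N)$: given a bounded $A\subset\cE'(N)$, choose $K$ with $A\subset\cE'(K)$; then $A$ is also bounded in the subspace topology of $\cE'(K)$, and the neighbourhood of zero $V\cap\cE'(K)$ absorbs it. To conclude that $V$ is itself a neighbourhood of zero in $\cE'(N)$, I would invoke bornologicity of $\cE'(N)$: as the strong dual of the Fr\'{e}chet-Schwartz (in particular Fr\'{e}chet-Montel) space $\C(N)$, the space $\cE'(N)$ is bornological, so every absolutely convex set that absorbs all bounded subsets is a neighbourhood of zero.

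The main obstacle I expect is this final appeal to bornologicity, which is a standard fact from the duality theory of Fr\'{e}chet spaces but not entirely elementary. If one wishes to avoid citing it, the alternative is a direct gluing construction: from bounded $B_n\subset\C(N)$ and $\epsilon_n>0$ satisfying $K(B_n,\epsilon_n)\cap\cE'(K_n)\subset V$ for an exhaustion $K_n$ of $N$, one would build a single bounded $B\subset\C(N)$ and $\epsilon>0$ with $K(B,\epsilon)\subset V$, using cut-off functions $\chi_n$ supported near $K_n$ to replace each $B_n$ by $\chi_n B_n$ and then combining them with suitable scaling so that the union stays bounded in $\C(N)$; this amounts to showing that $\cE'(N)=\varinjlim_{n}\cE'(K_n)$ as locally convex spaces.
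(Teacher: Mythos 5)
Your proof is correct, but it follows a genuinely different route from the paper's. The paper dispatches the lemma in two lines by citing Corollary~1.5 of Valdivia's \emph{Absolutely convex sets in barrelled spaces}: since $\cE'(N)$ is barrelled and $\cE'(N)=\bigcup_{K}\cE'(K)$, Valdivia's localization theorem (an absolutely convex set meeting each member of an increasing covering family of subspaces of a barrelled space in a neighbourhood of zero is itself a neighbourhood of zero) gives the result directly. You instead establish the key auxiliary fact that every strongly bounded subset of $\cE'(N)$ lies in some $\cE'(K)$ (via barrelledness of $\C(N)$ and Banach--Steinhaus), deduce that $V$ absorbs all bounded sets, and then invoke bornologicity of $\cE'(N)$ (as the strong dual of the distinguished, in fact Fr\'{e}chet--Montel, space $\C(N)$). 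Both proofs trade on deep structural properties of $\cE'(N)$: the paper uses barrelledness plus a specialized covering theorem; you use bornologicity plus the elementary localization of bounded sets. Your version is arguably more self-contained, since the localization of bounded sets is a standard distribution-theoretic fact and bornologicity of $\cE'(N)$ is widely known, whereas Valdivia's corollary is a sharper and less familiar tool. On the other hand, the paper's argument is shorter and does not require the detour through bounded sets. One small point worth making explicit in your write-up: the seminorm estimate $|u(\phi)|\leq C\,p_{K,m}(\phi)$ uses that the family $\{p_{K,m}\}$ is directed, so a single $(K,m)$ suffices; and the conclusion $\supp(u)\subset K$ follows because $p_{K,m}(\phi)=0$ whenever $\phi$ vanishes on a neighbourhood of $K$, as you correctly phrase it.
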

\begin{proof}
The space $\cE'(N)$ is barrelled and we have $\cE'(N)=\bigcup_{K}\cE'(K)$ as $K$ ranges over all compact subsets of $N$.
The proof now follows from Corollary $1.5$ in \cite{Val71}.
\end{proof}

\begin{prop}
Let $\pi:P\to M$ be a surjective submersion. The isomorphism
\[
\Phi_{x}:\cE'_{\pi}(P)(x)\to\cE'(P_{x})
\]
is an isomorphism of locally convex vector spaces for every $x\in M$.
\end{prop}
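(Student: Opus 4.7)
Since $\Phi_x$ is already known to be a vector space bijection, only continuity in both directions remains. Let $q:\cE'_\pi(P)\to\cE'_\pi(P)(x)$ denote the quotient map; it is continuous and open, so both the continuity and the openness of $\Phi_x$ reduce to the corresponding properties of $\Phi_x\com q:\cE'_\pi(P)\to\cE'(P_x)$, the map $T\mapsto T_x$.

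\emph{Continuity.} A basic neighbourhood of zero in $\cE'(P_x)$ has the form $K(B',\epsilon)$ for some bounded $B'\subset\C(P_x)$. Using a tubular neighbourhood of the closed submanifold $P_x\subset P$ together with a smooth cut-off, one constructs a continuous linear extension $E:\C(P_x)\to\C(P)$ with $(Eg)|_{P_x}=g$, so that $B:=E(B')$ is bounded in $\C(P)$. Since the seminorm $f\mapsto|f(x)|$ is continuous on each $\Cc(K)$, the absolutely convex set $W=\{f\in\Cc(M)\,:\,|f(x)|<\epsilon\}$ is a neighbourhood of zero for the LF-topology on $\Cc(M)$. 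The identity $T_x(g)=T(Eg)(x)$ then yields $T\in K(B,W)\Rightarrow T_x\in K(B',\epsilon)$.

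\emph{Openness.} By Lemma~\ref{Limit structure on compactly supported distributions} it is enough to show that, for every neighbourhood $W$ of zero in $\cE'_\pi(P)(x)$ and every compact $K\subset P_x$, the set $\Phi_x(W)\cap\cE'(K)$ is a neighbourhood of zero in $\cE'(K)$. I will do this by constructing, for each such $K$, a continuous linear section $\sigma_K:\cE'(K)\to\cE'_\pi(P)$ of $\Phi_x\com q$, i.e.\ with $(\sigma_K v)_x=v$; then $\sigma_K^{-1}(q^{-1}(W))$ is a neighbourhood of zero in $\cE'(K)$ whose image under $\Phi_x\com q\com\sigma_K=\id$ is contained in $\Phi_x(W)\cap\cE'(K)$. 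To build $\sigma_K$, pick an open $U\subset P$ containing $K$ on which $\pi$ trivialises as $U\approx\pi(U)\times N$; under this trivialisation $K$ becomes a compact subset of $\{x\}\times N\approx N$, and $\cE'(K)\subset\cE'(N)$. Choose $f\in\Cc(\pi(U))$ with $f(x)=1$ and set $\sigma_K(v)_y=f(y)v$. This belongs to $\Cc(\pi(U),\cE'(N))\cong\cE'_{\pi|_{U}}(U)$ and embeds continuously into $\cE'_\pi(P)$, since the topology on $\cE'_{\pi|_{U}}(U)$ is finer than the induced one. Clearly $(\sigma_K v)_x=f(x)v=v$.

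The main obstacle is verifying continuity of $\sigma_K$ against the explicit basic neighbourhoods $V_{\textbf{B},\textbf{m},\textbf{e}}$ from the previous proposition, which a priori impose an infinite family of seminorm conditions indexed by $y\in K_{n-1}^c$. The compact support of $f$ is what saves us: because $(D^\alpha\sigma_K(v))_y=(D^\alpha f)(y)\,v$ vanishes for $y\notin\supp(f)$, only finitely many indices $n$ contribute non-trivial constraints, and each collapses to $p_{B_n}(v)<\epsilon_n/C_n$ for a constant $C_n$ depending only on $f$. This is a finite intersection of neighbourhoods of zero in $\cE'(N)$, hence a neighbourhood of zero in $\cE'(K)$.
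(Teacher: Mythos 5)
Your proof is correct and follows essentially the same route as the paper: the continuity argument via a tubular-neighbourhood extension of $\C(P_x)$ into $\C(P)$ is identical, and for openness you reduce to the trivial bundle and invoke Lemma~\ref{Limit structure on compactly supported distributions} exactly as the paper does. The only difference is organizational: where the paper directly checks that $\Phi_x(V_{\textbf{B},\textbf{m},\textbf{e}})$ contains $K(B_n,\epsilon_n/M)$, you package the same multiplication-by-a-cutoff computation as continuity of the section $\sigma_K(v)_y=f(y)v$ (the same $\overline{v}$ already used for surjectivity in Proposition~\ref{Localization of transversal distributions}) and then deduce openness formally from the existence of a continuous right inverse. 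One small omission: to apply Lemma~\ref{Limit structure on compactly supported distributions} you need $\Phi_x(W)$ to be absolutely convex, so you should note that $W$ may be taken absolutely convex without loss of generality.
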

\begin{proof}
We have already shown that $\Phi_{x}$ is an isomorphism of vector spaces, so it remains to be
shown that it is continuous and open. It is actually enough to show that $\Phi_{x}:\cE'_{\pi}(P)\to\cE'(P_{x})$ is continuous and open.

To see that $\Phi_{x}$ is continuous, choose any basic open neighbourhood $K(B,\epsilon)$ of zero in $\cE'(P_{x})$,
where $B\subset\C(P_{x})$ is bounded and $\epsilon>0$. Note that the restriction map $\text{Res}_{x}:\C(P)\to\C(P_{x})$ is surjective and
continuous. Using a tubular neighbourhood of $P_{x}$ in $P$ we can construct a continuous section
$\text{Ext}_{x}:\C(P_{x})\to\C(P)$ of $\text{Res}_{x}$. It follows that $\text{Ext}_{x}(B)$ is bounded in $\C(P)$ and
\[
\Phi_{x}(K(\text{Ext}_{x}(B),V))\subset K(B,\epsilon),
\]
where $V=\{f\in\Cc(M)\,|\,|f(x)|<\epsilon\}$ is an open neighbourhood of zero in $\Cc(M)$.

We will now show that $\Phi_{x}$ is an open map. First we consider the case when $P=\RR^{l}\times N$ is a trivial bundle. 
It is enough to show that for any basic open neighbourhood $V_{\textbf{B},\textbf{m},\textbf{e}}$ of zero in $\Cc(\RR^{l},\cE'(N))$
the set $\Phi_{x}(V_{\textbf{B},\textbf{m},\textbf{e}})$ is a neighbourhood of zero in $\cE'(P_{x})\cong\cE'(N)$.
Take such $n\in\NN$ that $x\in\text{Int}(K_{n})$ and then choose $\rho\in\Cc(\text{Int}(K_{n}))$
with $\rho(x)=1$ and denote
$M=\max_{y\in K_{n},|\alpha|\leq m_{n}}|(D^{\alpha}\rho)(y)|$. Now define $\overline{v}=\rho v\in\Cc(\RR^{l},\cE'(N))$ for any $v\in\cE'(N)$. 
If $v\in K(B_{n},\frac{\epsilon_{n}}{M})$, we have for
any $y\in K_{n}$ and any $|\alpha|\leq m_{n}$ the following bound
\[
p_{B_{n}}(D^{\alpha}(\overline{v})_{y})=p_{B_{n}}((D^{\alpha}\rho)(y)v)\leq Mp_{B_{n}}(v)<\epsilon_{n},
\]
which shows that $\overline{v}\in V_{\textbf{B},\textbf{m},\textbf{e}}$ and
hence $K(B_{n},\frac{\epsilon_{n}}{M})\subset\Phi_{x}(V_{\textbf{B},\textbf{m},\textbf{e}})$.

In the case of a general submersion $\pi:P\to M$, choose an absolutely convex neighbourhood $V$ of zero in $\cE'_{\pi}(P)$. We need to show that
$\Phi_{x}(V)\cap\cE'(K)$ is a neighbourhood of zero in $\cE'(K)$ for every compact subset $K\subset P_{x}$. 
First choose an open neighbourhood $U\approx\pi(U)\times N$ of $K$ in $P$, on
which $\pi$ is trivial and $\pi(U)\approx\RR^{l}$. The set $V\cap\cE'_{\pi}(U)$ is then a neighbourhood of zero in $\cE'_{\pi}(U)\cong\Cc(\pi(U),\cE'(N))$,
which by previous paragraph implies that $\Phi_{x}(V\cap\cE'_{\pi}(U))$ is a neighbourhood of zero in $\cE'(N)$. Since
$\cE'(K)\subset\cE'(N)\subset\cE'(P_{x})$, this also implies that $\Phi_{x}(V)\cap\cE'(K)$ is a neighbourhood of zero in $\cE'(K)$. The proof now follows from
Lemma \ref{Limit structure on compactly supported distributions}.
\end{proof}

In the remainder of this section we will compute the strong $\Cc(M)$-dual of the space $\cE'_{\pi}(P)$. It is the locally convex space
\[
\Hom_{\Cc(M)}(\cE'_{\pi}(P),\Cc(M)),
\]
equipped with the topology of uniform convergence on 
bounded subsets of $\cE'_{\pi}(P)$. For every $F\in\C(P)$ we can define an element 
$\hat{F}\in\Hom_{\Cc(M)}(\cE'_{\pi}(P),\Cc(M))$ by $\hat{F}(T)=T(F)$ for $T\in\cE'_{\pi}(P)$, to
obtain a $\Cc(M)$-linear map
\[
\hat{}:\C(P)\to\Hom_{\Cc(M)}(\cE'_{\pi}(P),\Cc(M)).
\]
Our goal will be to show that the above map is an isomorphism of locally convex vector spaces.

Choose any $a\in\Hom_{\Cc(M)}(\cE'_{\pi}(P),\Cc(M))$, any $x\in M$ and any $T\in\fm_{x}\cdot\cE'_{\pi}(P)$, 
which we write as $T=f_{1}\cdot T_{1}+\ldots+f_{k}\cdot T_{k}$ for some $f_{i}\in\fm_{x}$ and $T_{i}\in\cE'_{\pi}(P)$. We now have
\[
a(T)(x)=a(f_{1}\cdot T_{1}+\ldots+f_{k}\cdot T_{k})(x)=f_{1}(x)a(T_{1})(x)+\ldots+f_{k}(x)a(T_{k})(x)=0,
\]
which implies that we have the induced continuous map
\[
a_{x}:\cE'_{\pi}(P)(x)\to\CC.
\]
Since $\cE'_{\pi}(P)(x)\cong\cE'(P_{x})$ by Proposition \ref{Localization of transversal distributions} and since $\cE'(P_{x})'\cong\C(P_{x})$, we can identify 
$a_{x}$ with some function $\check{a}_{x}\in\C(P_{x})$ such that $(\check{a}_{x})^{\hat{}}=a_{x}$. We can therefore assign
to $a$ a family of smooth functions $(\check{a}_{x})_{x\in M}\in\prod_{x\in M}\C(P_{x})$, which can be seen as
a function $\check{a}:P\to\CC$, which is smooth along the fibres of $\pi$. 

To show that $\check{a}$ is a smooth function on $P$, we first recall that $\C(P)$ acts on $\cE'_{\pi}(P)$ by $(F\cdot T)(G)=T(FG)$ 
for $F,G\in\C(P)$ and $T\in\cE'_{\pi}(P)$.
Since the multiplication map $F\cdot\underline{\hspace{2mm}}:\cE'_{\pi}(P)\to\cE'_{\pi}(P)$ is continuous,
we get the induced action of $\C(P)$ on $\Hom_{\Cc(M)}(\cE'_{\pi}(P),\Cc(M))$, defined by
\[
(F\cdot a)(T)=a(F\cdot T),
\]
for $F\in\C(P)$, $a\in\Hom_{\Cc(M)}(\cE'_{\pi}(P),\Cc(M))$ and $T\in\cE'_{\pi}(P)$. We have the basic relation
\[
\widecheck{F\cdot a}=F\check{a},
\]
which shows that the operation $a\mapsto\check{a}$ is $\C(P)$-linear.

\begin{prop}
Let $\pi:P\to M$ be a surjective submersion. Then for every $a\in\Hom_{\Cc(M)}(\cE'_{\pi}(P),\Cc(M))$ the induced function
$\check{a}:P\to\CC$ is smooth and we have $(\check{a})^{\hat{}}=a$.
\end{prop}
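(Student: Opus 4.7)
The plan is to prove smoothness of $\check{a}$, after which the identity $(\check{a})^{\hat{}}=a$ follows from a direct pointwise computation. Smoothness is a local matter, so I would reduce to a trivialised chart $U\approx V\times N$ of $\pi$, with $V$ identified with $\RR^{l}$. The key local step is to transfer smoothness of $\check{a}|_{U}:V\times N\to\CC$ to smoothness of the associated curve $x\mapsto\check{a}_{x}$ from $V$ into the Fr\'{e}chet space $\C(N)$; this is the Cartesian closedness of smooth maps valued in Fr\'{e}chet spaces \cite{KrMi97}. Since $\C(N)$ is complete, smoothness of this curve is in turn equivalent to scalar smoothness, that is, to smoothness of $x\mapsto v(\check{a}_{x})$ for every $v\in\cE'(N)=\C(N)'$.

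To establish scalar smoothness, fix $v\in\cE'(N)$, fix $x_{0}\in V$, and choose a cutoff $\rho\in\Cc(V)$ with $\rho\equiv 1$ on a neighbourhood $W$ of $x_{0}$. Define the lift $\overline{v}\in\Cc(V,\cE'(N))\cong\cE'_{\pi|_{U}}(U)\hookrightarrow\cE'_{\pi}(P)$ by $\overline{v}_{x}=\rho(x)v$, exactly the construction used in the proof of Proposition \ref{Localization of transversal distributions}. Then, by the construction of $\overline{v}$ and the defining relation $(\check{a}_{x})^{\hat{}}=a_{x}$,
\[
a(\overline{v})(x)=a_{x}(\overline{v}_{x})=\rho(x)\,a_{x}(v)=\rho(x)\,v(\check{a}_{x})
\]
for every $x\in V$. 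On $W$ this gives $v(\check{a}_{x})=a(\overline{v})(x)$, which is smooth in $x$ because $a(\overline{v})\in\Cc(M)$. As $x_{0}$ was arbitrary, scalar smoothness is established, hence $\check{a}$ is smooth on $U$, and varying $U$ over a cover of $P$ yields $\check{a}\in\C(P)$.

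For the identity, now that $\check{a}\in\C(P)$, I would evaluate both sides on an arbitrary $T\in\cE'_{\pi}(P)$. Using the isomorphism $\Phi$ and the defining property $(\check{a}_{x})^{\hat{}}=a_{x}$,
\[
T(\check{a})(x)=T_{x}(\check{a}|_{P_{x}})=T_{x}(\check{a}_{x})=a_{x}(T_{x})=a(T)(x)
\]
for every $x\in M$, which gives $(\check{a})^{\hat{}}=a$. The main obstacle is the scalar smoothness step: the key trick is the cutoff construction of $\overline{v}$, producing a compactly supported transversal distribution whose fibrewise value on $W$ is exactly $v$, so that the already known smoothness of $a(\overline{v})$ transfers directly to smoothness of $x\mapsto v(\check{a}_{x})$ near $x_{0}$.
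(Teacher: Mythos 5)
Your proof is correct, and its core ingredients coincide with the paper's: you reduce to a trivialising chart, use the localization isomorphism $\Phi_x$ together with the reflexivity $\cE'(P_x)'\cong\C(P_x)$ to define $\check{a}_x$, prove scalar smoothness of $x\mapsto\check{a}_x$ by pairing against lifts $\overline{v}\in\cE'_\pi(P)$ of $v\in\cE'(N)$, and invoke completeness of $\C(N)$ via \cite{KrMi97}. The one genuine difference is in how the reduction is packaged. The paper first treats the trivial bundle $P=M\times N$ with $M$ compact using a \emph{constant} lift $\overline{v}$, waves at partitions of unity for non-compact $M$, and then handles the general submersion by cutting off $a$ itself to $\chi\cdot a$ so that it factors through $\cE'_{\pi|_U}(U)\to\Cc(\pi(U))$ and the trivial case applies. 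You instead build the cutoff $\rho$ directly into the lift $\overline{v}_x=\rho(x)v$, which simultaneously guarantees $\overline{v}\in\cE'_\pi(P)$ (compact total support) and localizes the scalar-smoothness argument to a neighbourhood of $x_0$, making the separate $\chi\cdot a$ step unnecessary and filling in the paper's partition-of-unity remark explicitly. Your version is a bit tighter; what the paper's two-step reduction buys is isolating the conceptually cleanest case (constant lifts over a compact base) before adding cutoffs, which is pedagogically convenient but not logically simpler. The closing identity $(\check{a})^{\hat{}}=a$ is verified the same way in both.
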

\begin{proof}
Assume first that $P=M\times N$ is a trivial bundle over $M$ with fiber $N$. To show the main idea, we will for simplicity also 
assume that $M$ is compact. We can then view the family $\check{a}=(\check{a}_{x})_{x\in M}$ as a function on $M$
with values in the Fr\'{e}chet space $\C(N)$. To show that it is smooth, it is enough to show that it is scalarly smooth since $\C(N)$ is complete 
(see Theorem $2.14$ in \cite{KrMi97}). Denote for any $v\in\cE'(N)$ by $\overline{v}\in\C(M,\cE'(N))$ the constant transversal distribution with value $v$. 
The function $x\mapsto \check{a}_{x}$ is then scalarly smooth on $M$ since
\[
x\mapsto \check{a}_{x}(v)=a(\overline{v})(x)
\]
and $a(\overline{v})\in\C(M)$. This shows that $\check{a}:M\to\C(N)$ is smooth, hence $\check{a}\in\C(P)$. If $M$ is not compact,
one can show by using partitions of unity that $\check{a}$ is locally smooth, which however implies smoothness. 

Now we consider the case of a general submersion $\pi:P\to M$. Choose any $p\in P$ and an open neighbourhood $U\approx \pi(U)\times N$ of $p$, on
which $\pi$ is trivial. Furthermore, choose $\chi\in\Cc(U)$ which is equal to $1$ on some neighbourhood of $p$ 
and suppose $\supp(\chi)$ is contained in $L\times K$, where $L\subset\pi(U)$ and $K\subset N$ are compact subsets. The element
$\chi\cdot a\in\Hom_{\Cc(M)}(\cE'_{\pi}(P),\Cc(M))$ then maps into $\Cc(L)$ and induces a continuous mapping
\[
\cE'_{\pi|_{U}}(U)\hookrightarrow\cE'_{\pi}(P)\overset{\chi\cdot a}{\longrightarrow}\Cc(L)\hookrightarrow\Cc(\pi(U)).
\]
By the above paragraph $\widecheck{\chi\cdot a}$ is a smooth function on $U$. But since $\widecheck{\chi\cdot a}=\chi\check{a}$ and since $\chi\equiv 1$
on some neighbourhood of $p$, this implies that $\check{a}$ is smooth on some neighbourhood of $p$. 
The equality $(\check{a})^{\hat{}}=a$ then follows directly from the definition.
\end{proof}

We are now ready to state and prove our main theorem.

\begin{theo}
Let $\pi:P\to M$ be a surjective submersion. The map
\[
\hat{}:\C(P)\to\Hom_{\Cc(M)}(\cE'_{\pi}(P),\Cc(M))
\]
is an isomorphism of locally convex vector spaces. In particular, this implies that $\C(P)$ and $\cE'_{\pi}(P)$
are reflexive $\Cc(M)$-modules.
\end{theo}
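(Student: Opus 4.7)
My plan builds on the preceding proposition, which gives a $\C(P)$-linear map $\check{}\,:\Hom_{\Cc(M)}(\cE'_{\pi}(P),\Cc(M))\to\C(P)$ satisfying $(\check{a})^{\hat{}}=a$. The theorem then reduces to checking that $\hat{}$ and $\check{}$ are mutually inverse and bicontinuous. For the inverse identity in the other direction, I would compute $\widecheck{\hat{F}}$ fiberwise: $(\widecheck{\hat{F}})_{x}\in\C(P_{x})$ corresponds under $\cE'(P_{x})'\cong\C(P_{x})$ to the functional sending $v=\Phi_{x}(T)\in\cE'(P_{x})$ to $\hat{F}(T)(x)=T(F)(x)=T_{x}(F|_{P_{x}})=v(F|_{P_{x}})$, i.e.\ to $F|_{P_{x}}$. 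Hence $\widecheck{\hat{F}}=F$ and $\hat{}$ is bijective.

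Continuity of $\hat{}$ is then essentially a Banach--Steinhaus argument. A basic neighborhood $K(B,V)$ of zero in the target has $B\subset\cE'_{\pi}(P)$ bounded; viewing $B$ as a pointwise bounded family in $\Hom(\C(P),\Cc(M))$ and using that $\C(P)$ is Fr\'echet (hence barrelled), the family is equicontinuous, so some neighborhood $W$ of zero in $\C(P)$ satisfies $T(W)\subset V$ uniformly in $T\in B$, giving $\hat{W}\subset K(B,V)$.

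The remaining and essential step is openness of $\hat{}$, equivalently continuity of $\check{}$. Here I would argue locally: a subbasic neighborhood $V_{K,m,\epsilon}$ of zero in $\C(P)$ can, by a partition-of-unity reduction, be assumed to have $K$ contained in a chart $U\cong W\times N$ with $W\subset\RR^{l}$ open and $\pi$ the projection. Choose compacts $L_{1}\subset W$ and $L_{2}\subset N$ with $K\subset L_{1}\times L_{2}$, a cutoff $\chi\in\Cc(W)$ identically $1$ near $L_{1}$, and for each $z\in L_{2}$ and fiber multi-index $\gamma$ with $|\gamma|\leq m$ let $T_{z,\gamma}\in\cE'_{\pi}(U)\subset\cE'_{\pi}(P)$ correspond via Proposition \ref{Isomorphism Phi for trivial bundles} to the smooth family $x\mapsto\chi(x)(-1)^{|\gamma|}\partial^{\gamma}\delta_{z}$, so that $T_{z,\gamma}(F)(x)=\chi(x)\partial^{\gamma}_{z}F(x,z)$. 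The family $B=\{T_{z,\gamma}\}$ is strongly bounded in $\cE'_{\pi}(P)$ since $\{(-1)^{|\gamma|}\partial^{\gamma}\delta_{z}\}_{z,\gamma}$ is strongly bounded in $\cE'(N)$: one has $\sup_{z,\gamma}|\partial^{\gamma}G(z)|\leq p_{L_{2},m}(G)$ uniformly on any bounded $B_{0}\subset\C(N)$. Differentiating $T_{z,\gamma}(F)$ in $x$ where $\chi\equiv 1$ recovers $\partial^{\beta}_{x}\partial^{\gamma}_{z}F$ on $K$ for $|\beta|,|\gamma|\leq m$, so with $V=V_{L_{1},m,\epsilon}\subset\Cc(M)$ one obtains $\hat{F}\in K(B,V)\Rightarrow F\in V_{K,m,\epsilon}$. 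The main obstacle lies precisely in this step: one must manufacture a bounded family of $\pi$-transversal distributions that jointly detect all mixed partials of $F$ in both base and fiber directions, which is what forces the use of local triviality of $\pi$ together with a Leibniz computation against the cutoff $\chi$.
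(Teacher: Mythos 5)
Your proposal is correct and follows essentially the same route as the paper's own proof: Banach--Steinhaus for continuity, and for openness the same bounded family of transversal distributions (derivatives of Dirac deltas supported on constant local sections, cut off by a compactly supported base function), paired against $\check{a}$ and unwound via a Leibniz computation. The only place where the paper is slightly more careful than your sketch is in establishing strong boundedness of the family $B$, which it does by invoking Banach--Steinhaus once more (pointwise boundedness of $B(F)$ in $\Cc(M)$ for each fixed $F$, estimated by a seminorm $p_{\supp(\eta)\times K,2m}$), rather than deducing it directly from boundedness of $\{\partial^{\gamma}\delta_{z}\}$ in $\cE'(N)$; your reduction is fine in spirit but would need that one extra line.
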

\begin{proof}
We first show that the given map is an isomorphism of vector spaces. To show that it is injective, 
suppose $\hat{F}=\hat{G}$ for some $F,G\in\C(P)$. This implies
that $T(F)=T(G)$ for every $T\in\cE'_{\pi}(P)$. In particular, for every $x\in M$ we have $T_{x}(F_{x})=T_{x}(G_{x})$.
Since elements of $\cE'(P_{x})$ separate points of $\C(P_{x})$, we have that $F_{x}=G_{x}$ for every $x\in M$ and hence $F=G$.
The surjectivity follows from the fact that for all $a\in\Hom_{\Cc(M)}(\cE'_{\pi}(P),\Cc(M))$ we have $(\check{a})^{\hat{}}=a$.  

Next we show that $\hat{}:\C(P)\to\Hom_{\Cc(M)}(\cE'_{\pi}(P),\Cc(M))$ is continuous.
Choose an arbitrary basic neighbourhood
$K(B,V)$ of zero in $\Hom_{\Cc(M)}(\cE'_{\pi}(P),\Cc(M))$. The subset $B$ of $\cE'_{\pi}(P)$ is bounded, hence it
is, by the Banach-Steinhaus theorem, equicontinuous . So we can find a neighbourhood $V'$ of zero in $\C(P)$,
such that $B(V')\subset V$. For every $F\in V'$ and every $T\in B$ it then follows $\hat{F}(T)=T(F)\in V$, which shows that
$\widehat{V'}\subset K(B,V)$.

Finally, we have to show that the given map is open. We can choose a subbasis of open neighbourhoods of zero in $\C(P)$
in the following way. Choose an open subset $U\approx\pi(U)\times\RR^{k}$ of $P$ on which $\pi$ is a trivial bundle with fiber $\RR^{k}$ and
assume $\pi(U)\approx\RR^{l}$. Choose compact subsets $L\subset\pi(U)$ and $K\subset\RR^{k}$, $n\in\NN_{0}$, $\epsilon>0$ and define
\[
V_{L\times K,n,\epsilon}=\{F\in\C(P)\,|\,|D^{\alpha}_{x}D^{\beta}_{y}F(x,y)|<\epsilon\text{ for }x\in L,\,y\in K,\,|\alpha+\beta|\leq n\}.
\]
Every neighbourhood of zero in $\C(P)$ contains a finite intersection of sets as above, so it is enough to show that 
the image of $V_{L\times K,n,\epsilon}$ is open. To this extent choose $\eta\in\Cc(\pi(U))$, such that $\eta\equiv 1$
on some neighbourhood of $L$ and define a subset $B$ of $\cE'_{\pi}(P)$ by
\[
B=\{\llbracket E_{y},\eta\,D_{y}^{\beta}\rrbracket\,|\,y\in K,|\beta|\leq n\},
\]
where $E_{y}$ is the image of the constant section $\sigma_{E_{y}}:\pi(U)\to\pi(U)\times\RR^{k}$ with value $y$. To see that $B$ is bounded
in $\cE'_{\pi}(P)$, it is by the Banach-Steinhaus theorem enough to show that $B(F)$ is bounded in $\Cc(M)$ for every $F\in\C(P)$.
Note that $B(F)$ lies in the Fr\'{e}chet space $\Cc(\supp(\eta))$, so we have to show that $p_{\supp(\eta),m}(B(F))$ is bounded in $\RR$
for every $m\in\NN_{0}$. This follows from
\begin{align*}
p_{\supp(\eta),m}(B(F))&=\sup_{\substack{T\in B\\ x\in\supp(\eta) \\ |\alpha|\leq m}}|D^{\alpha}_{x}(T(F))(x)|
=\sup_{\substack{(x,y)\in\supp(\eta)\times K \\ |\alpha|,|\beta|\leq m}}|D^{\alpha}_{x}D^{\beta}_{y}(\eta\cdot F)(x,y)|, \\
&\leq p_{\supp(\eta)\times K,2m}(\eta\cdot F).
\end{align*}
Now denote by $V_{L,n,\frac{\epsilon}{2}}$ the neighbourhood of zero in $\Cc(M)$, given by
\[
V_{L,n,\frac{\epsilon}{2}}=\{f\in\Cc(M)\,|\,|D^{\alpha}f(x)|<\tfrac{\epsilon}{2}\text{ for }x\in L,|\alpha|\leq n\}.
\]
For any $a\in K(B,V_{L,n,\frac{\epsilon}{2}})$ we then have
\[
\sup_{\substack{T\in B\\ x\in L \\ |\alpha|\leq n}}|D^{\alpha}_{x}(T(\check{a}))(x)|
=\sup_{\substack{T\in B\\ x\in L \\ |\alpha|\leq n}}|D^{\alpha}_{x}(a(T))(x)|<\epsilon
\]
and consequently
\[
\sup_{\substack{(x,y)\in L\times K \\ |\alpha+\beta|\leq n}}|D^{\alpha}_{x}D^{\beta}_{y}\check{a}(x,y)|
\leq \sup_{\substack{(x,y)\in L\times K \\ |\alpha|,|\beta|\leq n}}|D^{\alpha}_{x}D^{\beta}_{y}\check{a}(x,y)|
=\sup_{\substack{T\in B\\ x\in L \\ |\alpha|\leq n}}|D^{\alpha}_{x}(T(\check{a}))(x)|<\epsilon.
\]
This shows that $\check{a}\in V_{L\times K,n,\epsilon}$ and hence $K(B,V_{L,n,\frac{\epsilon}{2}})\subset \widehat{V_{L\times K,n,\epsilon}}$.
\end{proof}

\noindent
{\bf Acknowledgements:} 
I would like to thank O. Dragičević, F. Forstnerič, A. Kostenko and J. Mrčun for support during research.

\end{document}